\documentclass[10pt]{amsart}
\usepackage[utf8]{inputenc}
\usepackage{todonotes}

\usepackage{etoolbox}

\makeatletter
\patchcmd{\@thm}{\let\thm@indent\indent}{\let\thm@indent\noindent}{}{}
\patchcmd{\@thm}{\thm@headfont{\scshape}}{\thm@headfont{\bfseries}}{}{}

\usepackage[T1]{fontenc}

\usepackage{hyperref}
\hypersetup{
    colorlinks = true,
    citecolor = [rgb]{0.00, 0.5, 0.00},
}

\usepackage{mathtools}

\usepackage{multirow}
\usepackage{longtable}

\usepackage{amsmath}
\usepackage{amsfonts}
\usepackage{amssymb}
\usepackage{bbm}
\usepackage{float}
\usepackage{amsthm}
\usepackage{comment}
\usepackage{amscd}
\usepackage{amsxtra}
\usepackage{epsfig}
\usepackage{epigraph}
\usepackage{pgfplots}

\usepackage{stmaryrd}

\usepackage{listings}
\usepackage{color}

\definecolor{dkgreen}{rgb}{0,0.6,0}
\definecolor{gray}{rgb}{0.5,0.5,0.5}
\definecolor{mauve}{rgb}{0.58,0,0.82}

\usepackage{color}
\usepackage[all]{xy}
\usepackage{verbatim}

\usepackage{eucal}
\usepackage{mathrsfs}

\usepackage{graphicx}
\usepackage{tikz-cd}

\usepackage{url}
\usepackage{verbatim}

\usepackage{xy}
\xyoption{all}

\usepackage{amsthm}
\usepackage{hhline}

\usepackage{caption} 
\makeatletter
\def\@tocline#1#2#3#4#5#6#7{\relax
  \ifnum #1>\c@tocdepth 
  \else
    \par \addpenalty\@secpenalty\addvspace{#2}%
    \begingroup \hyphenpenalty\@M
    \@ifempty{#4}{%
      \@tempdima\csname r@tocindent\number#1\endcsname\relax
    }{%
      \@tempdima#4\relax
    }%
    \parindent\z@ \leftskip#3\relax \advance\leftskip\@tempdima\relax
    \rightskip\@pnumwidth plus4em \parfillskip-\@pnumwidth
    #5\leavevmode\hskip-\@tempdima
      \ifcase #1
       \or\or \hskip 1em \or \hskip 2em \else \hskip 3em \fi%
      #6\nobreak\relax
    \hfill\hbox to\@pnumwidth{\@tocpagenum{#7}}\par
    \nobreak
    \endgroup
  \fi}
\makeatother

\usepackage[noabbrev,capitalize]{cleveref} 

\crefformat{nul}{(#2#1#3)}
\Crefformat{nul}{(#2#1#3)}

\crefname{section}{\S}{\S\S}
\crefname{subsection}{\S}{\S\S}
\crefname{axioms}{Axiom}{Axioms}
\crefname{exercise}{Exercise}{Exercises}
\crefname{exercisenum}{Exercise}{Exercises}
\crefname{construction}{Construction}{Constructions}
\crefname{problem}{Problem}{Problems}
\crefname{theorem}{Theorem}{Theorems}
\crefname{definition}{Definition}{Definitions}
\crefname{prop}{Proposition}{Propositions}
\crefname{lemma}{Lemma}{Lemmas}
\crefname{example}{Example}{Examples}
\crefname{examplealph}{Example}{Examples}
\crefname{corollary}{Corollary}{Corollaries}
\crefname{nonexample}{Nonexample}{Nonexamples}
\crefname{equation}{}{}
\crefname{summary}{Summary}{Summaries}
\crefname{recollection}{Recollection}{Recollections}
\Crefname{recollection}{Recollection}{Recollections}
\Crefname{nonexample}{Nonexample}{Nonexamples}
\Crefname{corollary}{Corollary}{Corollaries}
\Crefname{corollary}{Corollary}{Corollaries}
\Crefname{axioms}{Axiom}{Axioms}
\Crefname{exercise}{Exercise}{Exercises}
\Crefname{exercisenum}{Exercise}{Exercises}
\Crefname{construction}{Construction}{Constructions}
\Crefname{problem}{Problem}{Problems}
\Crefname{theorem}{Theorem}{Theorems}
\Crefname{definition}{Definition}{Definitions}
\Crefname{prop}{Proposition}{Propositions}
\Crefname{lemma}{Lemma}{Lemmas}
\Crefname{example}{Example}{Examples}
\Crefname{examplealph}{Example}{Examples}
\Crefname{section}{\S}{\S\S}
\Crefname{subsection}{\S}{\S\S}

\DeclareMathOperator{\End}{End}

\newtheorem{lemma}{Lemma}[section]

\newtheorem{corollary}[lemma]{Corollary}

\newtheorem{theorem}[lemma]{Theorem}

\newtheorem{prop}[lemma]{Proposition}

\newtheorem*{conj-moore}{Conjecture~\ref{moore-splitting}}
\newtheorem*{conj-cent}{Conjecture~\ref{centrality-conj}}
\newtheorem*{conj-tmf}{Conjecture~\ref{tmf-conj}}
\newtheorem*{thm-main}{Theorem~\ref{main-thm}}
\newtheorem*{cor-bpn}{Corollary~\ref{bpn}}
\newtheorem*{thm-string}{Theorem~\ref{mstring}}
\newtheorem{innercustomthm}{Theorem}
\newenvironment{customthm}[1]
{\renewcommand\theinnercustomthm{#1}\innercustomthm}
{\endinnercustomthm}

\theoremstyle{definition}

\newtheorem{definition}[lemma]{Definition}

\newtheorem{example}[lemma]{Example}

\newtheorem{remark}[lemma]{Remark}

\newcommand{\crefaliaslemma}[1]{\AddToHook{env/#1/begin}{\crefalias{lemma}{#1}}}
\forcsvlist{\crefaliaslemma}{corollary,prthm,theorem,prop,conjecture,theoremno,question,assume,definition,warning,example,problem,construction,remark,notation,variant,setup,terminology,recall,thesis,exercise,slogan,observe,summary,recipe}
\crefname{prthm}{Proto-Theorem}{Proto-Theorems}
\crefname{conjecture}{Conjecture}{Conjectures}
\crefname{theoremno}{Theorem}{Theorems}
\crefname{question}{Question}{Questions}
\crefname{assume}{Assumption}{Assumptions}
\crefname{warning}{Warning}{Warnings}
\crefname{remark}{Remark}{Remarks}
\crefname{notation}{Notation}{Notations}
\crefname{variant}{Variant}{Variants}
\crefname{setup}{Setup}{Setups}
\crefname{terminology}{Terminology}{Terminologies}
\crefname{recall}{Recollection}{Recollections}
\crefname{thesis}{Thesis}{Theses}
\crefname{slogan}{Slogan}{Slogans}
\crefname{observe}{Observation}{Observations}
\crefname{recipe}{Recipe}{Recipes}

\renewcommand{\AA}{\mathbf{A}}
\newcommand{\FF}{\mathbf{F}}
\newcommand{\bW}{\mathbb{W}}
\newcommand{\Z}{\mathbf{Z}}
\newcommand{\QQ}{\mathbf{Q}}
\newcommand{\ZZ}{\mathbf{Z}}

\newcommand{\cO}{\mathcal{O}}

\newcommand{\RR}{\mathbf{R}}

\newcommand{\Sp}{\mathrm{Sp}}

\newcommand{\SP}{\mathbb{S}}

\newcommand{\CP}{{\mathbf{CP}}}

\newcommand{\RP}{\mathbf{RP}}

\newcommand{\Map}{\mathrm{Map}}

\newcommand{\pullback}{\arrow[dr, phantom, "\lrcorner", very near start]}

\newcommand{\fZ}{\mathfrak{Z}}

\newcommand{\Alg}{\mathrm{Alg}}

\newcommand{\Eoo}{{\mathbf{E}_\infty}}

\newcommand{\CAlg}{\mathrm{CAlg}}

\newcommand{\Perf}{\mathrm{Perf}}

\newcommand{\E}[1]{{\mathbf{E}_{{#1}}}}

\newcommand{\GL}{\mathrm{GL}}

\newcommand{\KU}{\mathrm{KU}}

\newcommand{\ku}{\mathrm{ku}}

\newcommand{\Conf}{\mathrm{Conf}}

\newcommand{\B}{\mathrm{B}}

\newcommand{\W}{\mathrm{W}}

\newcommand{\cyc}{\mathrm{cyc}}

\newcommand{\one}{\mathbbm{1}}

\renewcommand{\log}{\mathrm{log}}

\newcommand{\cS}{\mathcal{S}}

\renewcommand{\S}{Section }

\newcommand{\NN}{\mathbf{N}}

\makeatletter
\providecommand{\leftsquigarrow}{%
  \mathrel{\mathpalette\reflect@squig\relax}%
}
\newcommand{\reflect@squig}[2]{%
  \reflectbox{$\m@th#1\rightsquigarrow$}%
}
\makeatother

\newcommand{\pw}[1]{[\![#1]\!]}

\newcommand{\EE}{\mathbf{E}}

\usepackage{slashed}

\usepackage{relsize}
\usepackage[bbgreekl]{mathbbol}
\usepackage{amsfonts}
\DeclareSymbolFontAlphabet{\mathbb}{AMSb} 
\DeclareSymbolFontAlphabet{\mathbbl}{bbold}

\newcommand{\Ainf}{A_\mathrm{inf}}

\usepackage{marginnote}

\title{Ramification in $\E{n}$-algebras}

\author{Sanath K. Devalapurkar}
\address{Fuld Hall, Princeton NJ 08540}
\email{sdevalapurkar@gmail.com}

\author{Ishan Levy}
\address{Fuld Hall, Princeton NJ 08540}
\email{ishanl@ias.edu}

\begin{document}

\maketitle

{\small \tableofcontents}

\section{Introduction}
Throughout this paper we work locally at a prime $p$.
Inspired by the work of Schw\"anzl-Vogt-Waldhausen in \cite{adjoining-roots}, the first author showed:
\begin{theorem}[{\cite[Theorem 1.3]{roots-of-unity}}]
	\label{thm:einfty}
	Let $h\geq 1$ and $p^k>2$. Then a $T(h)$-local\footnote{The version of this result for $K(h)$ and $T(h)$ are equivalent by the nilpotence theorem: see \cite[Lemma 2.3]{LMMT}.} $\EE_\infty$-ring cannot have a primitive $p^k$th 
    root of unity $\zeta_{p^k}$ in its $\pi_0$.
\end{theorem}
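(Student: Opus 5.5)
The plan is to reduce to Morava $E$-theory and then use power operations, in the form of a ring homomorphism lifting Frobenius, to reach a contradiction. Write $E=E_h$ for a height $h$ Morava $E$-theory.

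\emph{Reduction.} Suppose $R$ is a nonzero $T(h)$-local $\EE_\infty$-ring with a primitive $p^k$th root of unity $\zeta\in\pi_0R$. By the footnoted equivalence we may assume $R$ is $K(h)$-local. Then $R'=L_{K(h)}(E\otimes R)$ is a $K(h)$-local $\EE_\infty$-$E$-algebra, and it is nonzero because $E$ detects $K(h)$-local nonvanishing. The image of $\zeta$ in $\pi_0R'$ is still a root of unity of order dividing $p^k$. Replacing $\zeta$ by $\zeta^{p^{k-1}}$ (when $p$ is odd), or by $\zeta^{2^{k-2}}$ (a primitive $4$th root, when $p=2$ and $k\ge2$), we may assume $\zeta$ is a primitive $p$th root of unity, respectively a primitive $4$th root. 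Concretely, we assume $\Phi(\zeta)=0$ for the relevant cyclotomic polynomial $\Phi$. This is exactly where the hypothesis $p^k>2$ enters: $-1$ is harmless.

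\emph{Power operations.} By Ando--Hopkins--Strickland, $\pi_0R'$ carries an additive and multiplicative power operation
\[
\psi\colon \pi_0R'\to \pi_0R'\otimes_{E_0}E^0(B\Sigma_p)/I_{\mathrm{tr}},
\]
which is a ring homomorphism and satisfies the Frobenius congruence $\psi(x)\equiv x^p$ modulo $p$. At height $1$ this is the $\theta$-algebra structure $\psi(x)=x^p+p\theta(x)$. The target ring $S=E^0(B\Sigma_p)/I_{\mathrm{tr}}$ is finite flat over $E_0$ and $p$-torsion free; it is the ring of functions on subgroups of order $p$ of the formal group. Since $\psi$ is a ring map, $\Phi(\psi(\zeta))=0$. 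Moreover the Frobenius congruence gives $\psi(\zeta)\equiv\zeta^p \pmod p$, and $\zeta^p=1$ for $p$ odd (respectively $\zeta^2=-1$ for $p=2$).

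\emph{Contradiction.} Write $u=\psi(\zeta)$. Then $u$ is a root of $\Phi$, and $u\equiv 1$ mod $p$ (respectively $u\equiv-1$). Substituting into $\Phi$:
\begin{itemize}
\item for $p$ odd, $0=\Phi_p(u)\equiv\Phi_p(1)=p$ modulo $p(u-1)$-type terms;
\item for $p=2$, $0=u^2+1\equiv 2$ modulo $4$.
\end{itemize}
More precisely, expand $\Phi(u)$ around $u=1+pt$ to get $0=p+p^2(\cdots)$, so $p(1+p(\cdots))=0$ in $\pi_0R'\otimes S$. Because $R'$ is $K(h)$-local, $p$ lies in the Jacobson radical of the relevant complete rings, so $1+p(\cdots)$ is a unit and $p=0$ in $\pi_0R'\otimes_{E_0}S$. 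By faithful flatness of $S$ over $E_0$, we then get $p=0$ in $\pi_0R'$.

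Once $p=0$, we have $\Phi_p(\zeta)=(\zeta-1)^{p-1}$ (respectively $(\zeta+1)^2$ for $p=2$). So $\zeta-1$ is nilpotent, while $\Phi(\zeta)=0$ with $p=0$ is consistent only in degenerate cases. To rule these out, I would rerun the argument in $\pi_0R'$ directly. The homomorphism $\psi$ preserves the ideal generated by $\zeta-1$, which is nilpotent, and a $K(h)$-local $\EE_\infty$-ring with $p=0$ in $\pi_0$ must vanish. The reason is that the power operation on the element $p$ yields a unit multiple of $p^{p-1}$ plus a transfer term, forcing $1=0$; this is the standard fact that $K(h)$-local $\EE_\infty$-rings are never $\FF_p$-algebras on $\pi_0$ for $h\ge1$. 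Hence $R'=0$, which is a contradiction.

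\emph{Expected main obstacle.} The delicate step is the bookkeeping at general height $h$: making the Frobenius congruence precise after the transfer quotient, and checking that the "unit" $1+p(\cdots)$ really is invertible in $\pi_0R'\otimes S$. At height $1$ this is transparent via $\theta$. For higher $h$, I would first try to base change along a height-$1$ specialization, and failing that I would argue directly using completeness of $S$ with respect to $(p,u_1,\dots,u_{h-1})$.
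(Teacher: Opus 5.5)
The paper does not prove this statement; it quotes it from \cite{roots-of-unity}. So your argument has to stand on its own, and for $h\ge 2$ it has a genuine gap at the Frobenius congruence.

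\textbf{The congruence you use fails above height $1$.} The claim $\psi(x)\equiv x^p \pmod p$ in $\pi_0R'\otimes_{E_0}S$ is true at height $1$, where $S=E_0$ and $\psi=\psi^p$. At higher height, Rezk's congruence says less. It says the composite
$\pi_0R'\xrightarrow{\psi}\pi_0R'\otimes_{E_0}S\to\pi_0R'\otimes_{E_0}S/J=\pi_0R'/p$
is the Frobenius, where $J=\ker(S\to E_0/p)$ is the ideal of the point classifying the Frobenius kernel. For $h\ge 2$, $\operatorname{Spec}(S/p)$ has further components on which $\psi$ is not Frobenius, and $J\supsetneq pS$.

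Concretely, in Rezk's height-$2$ computation at $p=2$, one has $S=E_0[d]/(d^3-ad-2)$ and $\psi(a)=a^2+3d-ad^2$. This is not congruent to $a^2$ modulo $2S$: on the component $d^2=a$ of $\operatorname{Spec}(S/2)$ one gets $\psi(a)\equiv d$, a square root of $a$. The correct congruence there is $Q_0(x)\equiv x^2$, for the $d^0$-coefficient of $\psi(x)$ only.

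Since the naive congruence already fails on $E_0$, no naturality argument can give $\psi(\zeta)-1\in p\,(\pi_0R'\otimes_{E_0}S)$. All you get is $u=1+j$ with $j\in J\,(\pi_0R'\otimes_{E_0}S)$. Then $\Phi_p(u)=p+\binom p2 j+\dots+j^{p-1}=0$ only says $p\in(j)$. That is no contradiction, because $p$ is already a multiple of elements of $J$ inside $S$ (e.g.\ $2=d(d^2-a)$). So the step ``$p=0$ in $\pi_0R'\otimes_{E_0}S$'' is unsupported.

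\textbf{How to repair it.} Your fallback of specializing to height $1$ is the right idea, but it carries the real content rather than bookkeeping. After inverting $u_1$, the Frobenius component of $\operatorname{Spec}(S/p)$ separates from the others and a mod-$p$ congruence becomes available. That only helps if the height-$1$ part of $R$ is nonzero, i.e.\ $L_{K(1)}R\neq 0$. This is a theorem you must invoke:
\begin{itemize}
\item Hahn's result that for $H_\infty$-rings, $K(n)$-acyclicity implies $K(n+1)$-acyclicity, gives it directly.
\item Alternatively, the chromatic Nullstellensatz \cite{chromatic-nullstellensatz} maps $R$ to some $E(L)$ with unramified $\pi_0$, which bypasses power operations altogether.
\end{itemize}

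With $L_{K(1)}R\neq 0$ in hand, run your argument at height $1$, where $\pi_0L_{K(1)}R$ is a $\delta$-ring. We have $\psi(\zeta)=1+p\delta(\zeta)$ (resp.\ $-1+2\delta(\zeta)$), which gives $p(1+pw)=0$. Since $p$ is in the Jacobson radical, $p=0$. Then $\delta(p)=1-p^{p-1}$ must equal $\delta(0)=0$, forcing $1=0$. This is the same Frobenius-lift mechanism the paper uses for \Cref{thm:ramificationobstruction}.

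\textbf{Your final step.} As written it is garbled: modulo transfers $\psi(p)=p$, and the nilpotence of $\zeta-1$ plays no role. The clean argument is Hopkins--Mahowald, the case $S=\FF_p$ of \Cref{thm:perfectoidpi0}. An $\EE_2$-ring with $p=0$ in $\pi_0$ is then an $\FF_p$-module, hence $K(h)$-acyclic for $h\ge 1$, hence zero if it is $K(h)$-local.
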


This result can be viewed as saying that ramification in the homotopy rings of $\EE_{\infty}$-rings behaves quite differently at heights $\geq 1$ than at height $0$. Our goal in this note is to understand the extent to which this phenomenon continues for $\EE_n$-rings for $n<\infty$. 

Given an even $\EE_2$-ring $R$, it is not hard to adjoin roots of unity as an 
$\EE_1$-ring. Namely, one starts with the polynomial ring $R[x]$, and use 
\cite[Corollary 3.2]{angeltveit2008topological} to obtain an $\EE_1$-ring structure 
on $R[x]/\Phi_m(x)$, where $\Phi_m$ is the $m^{\text{th}}$ cyclotomic polynomial. 
Thus the first $m$ for which there might be obstructions to adding roots of unity 
to $\EE_n$-rings is $n=2$. We show that there are indeed obstructions in this case.

\begin{customthm}{A}
	\label{thm:roote2}
	Let $h\geq 1$. There is a $k\geq 0$ depending on $p,h$ such that a $T(h)$-local $\EE_2$-ring cannot have $\zeta_{p^k}$ in $\pi_0$.
\end{customthm}

We obtain this result as a consequence of the more general result below, which obstructs perfectoid rings from mapping into the $\pi_0$ of nonzero $T(h)$-local $\EE_2$-rings. \Cref{thm:roote2} is a consequence of taking $S$ to be $\Z_p^{\cyc}$ the $p$-adic completion of the maximal $p$-cyclotomic extension of the integers:

\begin{customthm}{B}
	\label{thm:perfectoidpi0}
	Let $R$ be an $\EE_2$-ring with a map $S \to \pi_0R$ where $S$ is a perfectoid ring. Then there is a map of $\EE_2$-rings $S \to R$. In particular $R$ is an $\EE_1$-$S$-algebra.
\end{customthm}

Given \Cref{thm:roote2}, we can make the following definition:

\begin{definition}
	Fix a prime $p$, a height $h >0$, and an $n\geq 2$. The numbers $r_{p,h,n}$ to be the largest natural number such that there exists a $T(h)$-local $\EE_n$-algebra with a $p^{r_{p,h,n}}$-th root of unity in $\pi_0$.
\end{definition}

The numbers $r_{p,h,n}$ are nonincreasing in $n$, and measure the extent to which roots of unity must not exist in $T(h)$-local $\EE_n$-algebras. We prove the following results about the numbers $r_{p,h,n}$:

\begin{customthm}{C}\label{thm:optimalroots}
	The following are true about the $r_{p,h,n}$:
	\begin{itemize}
		\item $r_{p,1,4}$ is $0$ for $p>2$ and $1$ for $p=2$.
		\item Fix $p$ and $h$. Then for $n\gg 0$, $r_{p,h,n}$ is $0$ for $p>2$ and $1$ for $p=2$.
	\end{itemize}
\end{customthm}

At height $h=1$, the above result is proven by using the existence of $\delta$-operations $\pi_0(A) \to \pi_0(A/p^2)$. In fact, our proof also shows the more general obstruction to ramification in $\EE_4$-$\KU$-algebras:
\begin{customthm}{D}
	Let $A$ be a $K(1)$-local $\EE_4$-$\KU$-algebra. If there is a map from $\cO_K \to \pi_0A$ where $\cO_K$ is the ring of integers of a ramified extension $K$ of $\QQ_p$, then $A = 0$.
\end{customthm}

We also study in \Cref{section:strict} obstructions to adjoining roots of unity in a strict way, for example showing in \Cref{cor:e1a2} that cyclotomic polynomial $\Phi_{p^k}(q) $ is not $\EE_3$-strict in $\KU[q]^{\wedge}_p$ for $p^k>2$.
This is an obstruction to constructing $\KU[\zeta_{p^k}]$ as an $\EE_2$-algebra in a strict way. 

Despite these obstructions, we were suprised to learn that there are $K(1)$-local $\EE_2$-algebras $A$ with a $p$th root of unity.
\begin{example}\label{exm:angeltveit}
	It follows from \cite[Theorem 5.17]{angeltveit2008topological} that there is a $K(1)$-local $\EE_2$-$\KU$-algebra with $\pi_0$ isomorphic to $\Z_p[\zeta_p]$. 
	This can be obtained from a Hochschild cohomology of a $2$-periodic form of $K(1)$. We note \cite[Conjecture 5.20]{angeltveit2008topological} describes an analog of this at higher height.
\end{example}

The above example shows that the number $r_{p,h,n}$ are interesting, and we believe it would be interesting to compute them.


\begin{remark}\label{rmk:higher roots}
	One can attempt to study analogues of the statements discussed here for higher roots 
	of unity in the sense of \cite[Definition 4.2]{chromatic-cyclotomic}.
	In this case, recall that a primitive $p^r$th root of unity of height $n$ for some $r\geq 1$ in an 
	commutative algebra $R$ in a presentably symmetric monoidal stable category 
	 is an $\EE_{\infty}$-map $\one[B^n\ZZ/p^r] \to R$ such that the pushout of commutative algebras 
	of the span $\one[B^n\ZZ/p^{r-1}] \leftarrow \one[B^n\ZZ/p^r] \to R$ is $0$.

	We then claim that higher primitive roots of unity don't exist at the wrong height, i.e., 
	if $R$ is a $T(h)$-local $\EE_{\infty}$-algebra with $h \neq n$ and a primitive $p^r$th root of unity, 
	then $R = 0$. If $n>h$, then $B^n\ZZ/p^r$ is $T(h)$-acyclic, so the pushout is $R$ itself.
	If $n<h$, then because the $T(n)$-local category is virtually $(\FF_p,n)$-orientable \cite[Proposition 7.27]{chromatic-fourier}, we can add a 
	 height $n$ $p^r$th root of unity, we can identify this pushout with its Fourier dual \cite[Theorem 7.33]{chromatic-fourier}, which is
	$\one^{B^{h-n}((\ZZ/p^{r-1})^{\vee})} \leftarrow \one^{B^{h-n}((\ZZ/p^r)^{\vee})} \to R$. However, for $h-n\geq 1$, tensoring along the map $\one^{B^{h-n}((\ZZ/p^r)^{\vee}))} \to \one^{B^{h-n}((\ZZ/p^{r-1})^{\vee})}$ 
	is conservative, since the target ring has a $B^{h-n-1}(\ZZ/p)$-action whose fixed points is the source, and fixed points commutes with colimits by $\infty$-semiadditivity \cite{tn-ambi}.

	We note that the notion of higher primitive roots of unity considered here is much stronger than what is considered in this paper, 
	it is defined for $\EE_{\infty}$-algebras and is a strict notion (see \Cref{section:strict}).
	It would be interesting to explore the extent to which this phenomenon persists for weaker notions of higher roots of unity.
	
\end{remark}

\subsection*{Acknowledgments}
We are grateful to Arpon Raksit and Tomer Schlank for conversations related to this work. We thank John Rognes for pointing us to Angeltveit's paper for \Cref{exm:angeltveit}, and thank Tomer Schlank for pointing us to \cref{rmk:higher roots}.

Most of the work for this paper was done around 2022. At the time, the first author was 
supported by the PD Soros Fellowship and the NSF GRFP. The second author was supported 
by the NSF GRFP under Grant No. 1745302. The second author is currently supported by the Clay Research Fellowship and NSF Grant No. DMS-2424441.

\section{Perfectoid rings in $\pi_0$}

In this section, we prove our main result:

\begin{customthm}{B}
	Let $R$ be an $\EE_2$-ring with a map $S \to \pi_0R$ where $S$ is a perfectoid ring. Then there is a map of $\EE_2$-rings $S \to R$. In particular, $R$ is an $\EE_1$-$S$-algebra.
\end{customthm}

As a consequence, we deduce \Cref{thm:roote2}.

\begin{proof}[Proof of \Cref{thm:roote2}]
	Suppose that the conclusion of \Cref{thm:roote2} is false. Then for each 
	$k\geq 0$, there exists a $T(h)$-local $\EE_2$-ring $R_k$ such that $\pi_0 R_k$ 
	contains a primitive $p^k$th root of unity $\zeta_{p^k}$. Let $R$ be an ultraproduct 
	of the $R_k$. Then $R$ has nonzero $T(h)$-localization: this is because it is a 
	filtered colimit of non-zero $T(h)$-local rings given by products of the $R_k$, and 
	$T(h)$-local filtered colimits of nonzero $T(h)$-local rings are nonzero (see the 
	proof of \cite[Lemma A.11]{chromatic-nullstellensatz}). But $\pi_0(R)$ contains a 
	system of primitive $p$th roots of unity, so there is a map 
	$\Z_p^\mathrm{cyc} \to \pi_0(R)$. Applying \Cref{thm:perfectoidpi0} to the perfectoid 
	ring $\Z_p^\mathrm{cyc}$, we obtain a contradiction, since $\Z_p^\mathrm{cyc}$ is 
	$T(h)$-acyclic.
\end{proof}

We now turn to proving \Cref{thm:perfectoidpi0}. Recall that there is a spherical Witt vector functor $\bW:\Perf_{\FF_p} \to \CAlg(\Sp_p)$. To prove \Cref{thm:perfectoidpi0}, we first show that $R$ lifts to an $\EE_2$-algebra over $\bW((S/p)^\flat)$, which follows from the fact that spherical Witt vectors always act centrally on $\EE_n$-algebras (\Cref{prop:centrality} below). Then we conclude by applying a perfectoid version of the Hopkins--Mahowald theorem due to Mao (\Cref{thom-spectra}), which implies that the existence of an $\EE_2$-map from $S$ to an $\EE_2$-$\bW((S/p)^\flat)$-algebra can be checked on $\pi_0$.

We begin with the proposition below, which is an analogue of \cite[Proposition 2.2]{chromatic-nullstellensatz} in the setting of $\EE_n$-algebras.
\begin{prop}\label{lem:wittenetale}
	For $n\geq2$, the composite $\bW:\Perf_{\FF_p} \to \CAlg(\Sp_p) \to \Alg_{\EE_n}(\Sp_p)$ is fully faithful with right adjoint $(-)^{\flat}$, sending $R$ to $(\pi_0R/p)^{\flat}$.
\end{prop}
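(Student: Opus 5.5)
We need to show that for every $\EE_n$-ring $R$ with $n\geq 2$ and every perfect $\FF_p$-algebra $A$, restriction along $\pi_0$ induces an equivalence
\[
\Map_{\Alg_{\EE_n}(\Sp_p)}(\bW(A),R)\simeq \Hom_{\Perf_{\FF_p}}(A,(\pi_0R/p)^\flat)=\Hom(A,\pi_0R/p).
\]
Here $R$ is $p$-complete and we may assume it connective; otherwise we replace it by $\tau_{\geq0}R$, which is again an $\EE_n$-ring with the same $\pi_0$ and the same maps from the connective source $\bW(A)$. Full faithfulness then follows, since $\bW(A)$ has $(\pi_0\bW(A)/p)^\flat=A$.

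The plan is to reduce to the $\EE_\infty$ statement \cite[Proposition 2.2]{chromatic-nullstellensatz} by exploiting the fact that $\bW(A)$ is "étale over $\SP_p$" in a strong sense. The key input is that the $\EE_\infty$-cotangent complex, and likewise the $\EE_n$-cotangent complex, of $\bW(A)$ over $\SP_p$ vanishes $p$-adically. For the $\EE_\infty$ cotangent complex this holds because $L_{A/\FF_p}=0$ for perfect $A$ and everything is $p$-complete. For the $\EE_n$ cotangent complex we use Francis' description: $L^{\EE_n}_{B}$ is related to $\Sigma^{-n}$ of the cofiber of $\int_{S^{n-1}}B\to B$, so we must show that the factorization homology $\int_{S^{n-1}\times\RR}\bW(A)\to \bW(A)$ is a $p$-adic equivalence. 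Because $\bW(A)$ is $\EE_\infty$, $\int_{S^{n-1}}\bW(A)\simeq \bW(A)^{\otimes S^{n-1}}$, a higher Hochschild homology. Reducing mod $p$ and using base change to $\FF_p$ identifies it with $A^{\otimes_{\FF_p} S^{n-1}}$ computed over $\SP\otimes\FF_p$. Perfectness, i.e.\ Frobenius being an isomorphism, forces $\mathrm{HH}$ over any base to be trivial, as in the argument that $\mathrm{THH}(A)\simeq A$ in degree $0$ up to the $\mathrm{THH}(\FF_p)$ factor. The cleaner route is to use $\SP$-linearity: $\bW(A)\otimes_{\SP}\bW(A)\simeq\bW(A\otimes A)$ $p$-adically, which makes the iterated tensor over $S^{n-1}$ computable as $\bW$ of a colimit of perfect rings over $S^{n-1}$, and that colimit equals $A$ for $n-1\geq1$ because $\Perf$ colimits are $0$-truncated. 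That colimit is the key point, and it is exactly where $n\geq 2$ is needed: for $n=1$, $S^0$ is disconnected and $A\otimes A\neq A$.

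With vanishing $\EE_n$-cotangent complex in hand, run obstruction theory up the Postnikov tower of $R$. The tower $R\to\cdots\to\tau_{\leq k}R$ consists of square-zero extensions of $\EE_n$-rings, by Lurie (HA 7.4.1.28 for $\EE_n$). Hence
\[
\Map_{\EE_n}(\bW(A),\tau_{\leq k}R)\to\Map_{\EE_n}(\bW(A),\tau_{\leq k-1}R)
\]
is an equivalence, as its fibers are governed by maps out of the vanishing $\EE_n$-cotangent complex. So we reduce to $\pi_0R$, a discrete commutative ring, where $\EE_n$-maps from the $\EE_\infty$ ring $\bW(A)$ agree with ring maps. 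Indeed $\bW(A)$ is connective, and $\EE_n$-maps into a discrete ring factor through $\tau_{\leq0}$, where all $\EE_n$ structures are commutative for $n\geq 2$. Then $\Hom(W(A),\pi_0R)\simeq\Hom(A,(\pi_0R/p)^\flat)$ by the classical universal property of Witt vectors of perfect rings for $p$-complete targets. A $\lim^1$ issue remains, but it is handled by Milnor sequences and the equivalences at each stage.

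The main obstacle is the vanishing of the $\EE_n$-cotangent complex. If the factorization homology argument gets messy, I would instead reduce via $p$-completeness to checking mod $p$, where $\bW(A)/p=A$ as $\EE_\infty$-$\SP/p$-algebras. Alternatively, I would present $A$ as a colimit of perfections of polynomial rings and use that $\bW$ of $\FF_p[x^{1/p^\infty}]$ is a $p$-completed colimit of $\SP[x]$ along $x\mapsto x^p$. The $\EE_n$-maps out of $\SP[x]=\Sigma^\infty_+\NN$ are understood via $\Omega^n$-loops, and the colimit along $p$-th powers should kill the higher obstructions.
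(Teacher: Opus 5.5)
Your proposal is correct and follows essentially the paper's proof. You show the $p$-complete $\EE_n$-cotangent complex of $\bW(A)$ vanishes via Francis's factorization-homology criterion, reducing to a computation in $\Perf_{\FF_p}$ because $\bW$ is symmetric monoidal and colimit-preserving; you then lift along the Postnikov tower of square-zero extensions to a discrete target, where the classical universal property of $W(A)$ applies. The only difference is cosmetic: you use that $\bW(A)$ is $\EE_\infty$ to identify $\int_{S^{n-1}\times\RR}\bW(A)$ directly with the tensor $S^{n-1}\otimes\bW(A)$ in $\CAlg$, whereas the paper reduces to $n=2$ via $\int_{\RR^n-0}B\simeq B\otimes_{\int_{\RR^{n-1}-0}B}B$ and computes $A\otimes_{A\otimes A}A\cong A$.
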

\begin{proof}
	We first show that for $n\geq 2$, $\bW(A)$ is formally \'etale as a $p$-complete $\EE_n$-algebra, i.e that the $\EE_n$-cotangent complex vanishes. To do this, it suffices by \cite[Theorem 2.26, Proposition 3.16]{francis2013tangent} to show that $\int_{\RR^n-0}\bW(A) \cong \bW(A)$. Because $\int_{\RR^n-0}\bW(A)\cong \bW(A)\otimes_{\int_{\RR^{n-1}-0}\bW(A)}\bW(A)$, it suffices to show this for $n=2$, and since $\bW$ is symmetric monoidal, it suffices to show this in $\Perf_{\FF_p}$ for $A$. This follows since
	$$\int_{\RR^2-0}A \cong A\otimes_{A\otimes A}A \cong A$$
	
	It follows from the existence of the adjunction that $\bW(A)$ is fully faithful, so it only remains to show the adjunction, i.e produce a natural isomorphism $$\Map_{\Alg_{\EE_n}(\Sp_p)}(\bW(A),R) \cong \Map_{\Perf_{\FF_p}}(A,(\pi_0R/p)^{\flat})$$ To do this, since $\bW(A)$ is connective, it suffices to assume that $R$ is connective. Then since the Postnikov tower of $R$ is a tower of square zero extensions and $\bW(A)$ is formally \'etale, we reduce to the case $R$ is discrete, where the result is now clear, since $\pi_0\bW(A) \cong \W(A)$.
\end{proof}

The next key observation in proving \Cref{thm:perfectoidpi0} is that the counit map $\bW(R^{\flat}) \to R$ is always central. Recall from \cite[Section 5.3]{HA} and \cite[Definition 3.1, Proposition 3.16]{francis2013tangent} that the $\EE_{n+1}$-center of an $\EE_n$-algebra $R$ is given by the endomorphism algebra $\End_{\int_{S^{n-1} \times \RR} R}(R)$.

\begin{prop}\label{prop:centrality}
	Given a $p$-complete $\EE_n$-algebra $R$ with $n\geq2$, $R$ is an $\EE_n$-$\bW(R^{\flat})$-algebra. Moreover, $\fZ_{\EE_n}(R)^{\flat} \cong R^{\flat}$, where $\fZ_{\EE_n}(R)$ is the $\EE_{n+1}$ center of $R$.
\end{prop}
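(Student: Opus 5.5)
The plan is to compute the flat of the center, then use the adjunction of \Cref{lem:wittenetale} to produce an $\EE_{n+1}$-map into the center.

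\textbf{Step 1: $\fZ_{\EE_n}(R)^{\flat} \cong R^{\flat}$.} Write $\fZ = \fZ_{\EE_n}(R) = \End_{\int_{S^{n-1}\times\RR}R}(R)$. The forgetful map $\fZ \to R$ is evaluation at the unit; it is a map of $\EE_n$-algebras, hence induces $\fZ^\flat \to R^\flat$. To see that this is an isomorphism, test against perfect $\FF_p$-algebras $A$ using \Cref{lem:wittenetale}. By that result, $\Map(A,\fZ^\flat) \simeq \Map_{\Alg_{\EE_{n+1}}}(\bW(A),\fZ)$. By the universal property of the center, the latter is the space of $\EE_n$-maps $\bW(A)\otimes R \to R$ that restrict to the identity on $R$, where we regard $\bW(A)$ as an $\EE_{n+1}$-algebra (even $\EE_\infty$). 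I would rather phrase this as follows: the space of $\EE_{n+1}$-maps $\bW(A)\to\fZ$ is the fiber over $\mathrm{id}_R$ of the map
\[
\Map_{\EE_n}(\bW(A)\otimes R,R)\to\Map_{\EE_n}(R,R).
\]

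Since $\bW(A)\otimes R$ is the coproduct of $\bW(A)$ and $R$ in $\EE_n$-algebras (the tensor product is the coproduct for $\EE_n$-algebras only when one factor is $\EE_{n+1}$-central; I would check this, or instead use the description of the center as the terminal object among $\EE_n$-algebras $C$ with a pairing $C\otimes R\to R$), this fiber is $\Map_{\EE_n}(\bW(A),R)\simeq \Map(A,R^\flat)$. Comparing the two descriptions identifies $\Map(A,\fZ^\flat)\simeq\Map(A,R^\flat)$ compatibly with the forgetful map, and Yoneda in $\Perf_{\FF_p}$ gives $\fZ^\flat\cong R^\flat$.

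\textbf{Step 2: the algebra structure.} The identity of $\fZ^\flat\cong R^\flat$ is adjoint to an $\EE_{n+1}$-map $\bW(R^\flat)\to\fZ$; here I apply \Cref{lem:wittenetale} with $n+1$ in place of $n$, which is valid since $n+1\geq 2$. An $\EE_{n+1}$-map into the $\EE_{n+1}$-center is exactly an action of $\bW(R^\flat)$ on $R$ making $R$ an $\EE_n$-$\bW(R^\flat)$-algebra, by \cite[Section 5.3]{HA}. Moreover, the composite $\bW(R^\flat)\to\fZ\to R$ is the counit.

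\textbf{Main obstacle.} The main obstacle is the step identifying maps into the center with $\EE_n$-maps out of $\bW(A)$ alone. This requires the pairing $\bW(A)\otimes R\to R$ to be determined by its restriction to $\bW(A)$, which is not formal for general $\EE_{n+1}$-algebras. I would handle it using formal étaleness as in the proof of \Cref{lem:wittenetale}. Reduce to $R$ connective, noting that the flat and the center only see the connective cover up to a check. Then induct up the Postnikov tower, where the obstruction spaces are controlled by the $\EE_n$- and $\EE_{n+1}$-cotangent complexes of $\bW(A)$, both of which vanish. This reduces everything to discrete $R$. There, $\EE_n$-maps from $\W(A)$ land in the commutative center $Z(\pi_0R)$, and the claim is that every map from a perfect ring into $\pi_0R/p$ factors through the center after taking flats. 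This holds because a map $\W(A)\to\pi_0 R$ of $\EE_2$-rings already lands in a commutative subring and is automatically central, since $\EE_2$-structures force homotopy commutativity with everything on $\pi_0$.
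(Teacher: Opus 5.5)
Your reformulation of Step 1 is correct as far as it goes: by \Cref{lem:wittenetale} and the universal property of the center, $\Map(A,\fZ_{\EE_n}(R)^\flat)$ is the set of $\EE_n$-maps $\bW(A)\otimes R\to R$ restricting to the identity on $R$. Step 2 also follows from Step 1 just as you say. However, $\bW(A)\otimes R$ is not the coproduct in $\Alg_{\EE_n}$. For instance, the coproduct of free $\EE_n$-algebras on $x$ and $y$ contains a Browder bracket $[x,y]$ in degree $n-1$ that their tensor product lacks. So identifying that set with $\Map_{\EE_n}(\bW(A),R)$ is the entire content of the proposition, and everything rests on your Postnikov argument.

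That argument has a genuine gap at the reduction to connective $R$. Nothing makes the center a functor of $\tau_{\geq 0}R$. What you can do formally is push a pairing down to $\bW(A)\otimes\tau_{\geq 0}R\to\tau_{\geq 0}(\bW(A)\otimes R)\to\tau_{\geq 0}R$. This gives a map of sets $\Map(\bW(A),\fZ_{\EE_n}(R))\to\Map(\bW(A),\fZ_{\EE_n}(\tau_{\geq0}R))$ over $\Map(\bW(A),R)$, so the connective case yields only \emph{injectivity} of $\fZ_{\EE_n}(R)^\flat\to R^\flat$.

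Surjectivity means actually producing an $\EE_n$-pairing $\bW(R^\flat)\otimes R\to R$ that extends the identity. It cannot come from obstruction theory when $R$ is nonconnective. The source $\bW(A)\otimes R$ is then nonconnective, and the Postnikov tower of $R$ is not a tower of square-zero extensions of $\EE_n$-rings starting from a discrete ring. In \Cref{lem:wittenetale} this reduction works only because the source $\bW(A)$ is connective. The nonconnective case is exactly the one that matters, since the proposition is applied to $T(h)$-local rings.

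The missing idea is the paper's direct construction, which needs no connectivity. Base-change $\bW(R^\flat)\otimes R$ along the map $\bW(R^\flat\otimes R^\flat)\to\bW(R^\flat)$ induced by the surjective multiplication of $R^\flat$. Since $\bW$ sends surjections to idempotent algebras, the a priori $\EE_{n-1}$-map $m$ refines canonically to an $\EE_n$-map, and $m\circ(1\otimes R)$ is an equivalence. This gives the pairing, hence an $\EE_n$-map $\bW(R^\flat)\to\fZ_{\EE_n}(R)$, which upgrades to an $\EE_{n+1}$-map by formal \'etaleness. Injectivity is then a second idempotent-algebra argument, using $\bW(\FF_p[x]^\sharp)\to\bW(\FF_p)$ and the universal property of the center.

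Two smaller points:
\begin{itemize}
\item Even for connective $R$, your induction needs the relative $\EE_n$-cotangent complex of $R\to\bW(A)\otimes R$ to vanish. This is not a base change of $L_{\bW(A)}$, again because the tensor product is not a pushout in $\Alg_{\EE_n}$, so it needs its own proof.
\item In the discrete case, commutativity of $\pi_0$ alone produces no $\EE_n$-pairing. The right reason is that a discrete $\EE_2$-ring is a commutative ring, hence $\EE_\infty$, so multiplication $\W(A)\otimes\pi_0R\to\pi_0R$ is an $\EE_n$-pairing.
\end{itemize}
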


\begin{proof}
	First we refine the $\EE_n$-algebra map $\bW(R^{\flat}) \to R$ to an $\EE_{n}$-algebra map $\bW(R^{\flat}) \to \fZ_{n}(R)$. To do this, it suffices by the universal property of $\fZ_{n}(R)$ (\cite[Definition 5.3.1.2]{HA}) to produce a diagram of $\EE_n$-algebras
	
		\begin{center}
		\begin{tikzcd}
			R\ar[rr,equal]\ar[dr,"1\otimes R",swap] & &R. \\
			& \ar[ur]\bW(R^{\flat})\otimes R & 
		\end{tikzcd}
	\end{center}

	To do this, we consider the base-change along the multiplication map, 
	which a priori is a map of $\EE_{n-1}$-algebras: 
	$$m:\bW(R^{\flat}) \otimes R \to (\bW(R^{\flat})\otimes R)\otimes_{\bW(R^{\flat}\otimes R^{\flat})}\bW(R^{\flat}).$$ Next, we note that the target of $m$ is an idempotent algebra over the source: this follows because $\bW$ takes surjective maps to idempotent algebras, and we are basechanging along such a map. Thus $m$ canonically refines to an $\EE_n$-algebra map. 
	Because $1\otimes R$ can be written as an $\EE_{n-1}$-algebra map as $R \to R\otimes_{\bW(R^{\flat})}\bW(R^{\flat}\otimes R^{\flat})$ via the right unit, we find that $m \circ (1\otimes R)$ is an equivalence\footnote{A subtlety is that it apriori is only the \textit{identity} map as an $\EE_{n-1}$-ring map. But any equivalence is equivalent to the identity map, so this doesn't matter.}. This gives the desired triangle.
	
	Since we have an $\EE_n$-algebra map $\bW(R^{\flat}) \to \fZ_n(R)$, by applying 
	\Cref{lem:wittenetale} it follows that the map $\bW(R^{\flat}) \to \fZ_n(R)$ 
	automatically refines to an $\EE_{n+1}$-map, since the mapping spaces in $\EE_n$ 
	and $\EE_{n+1}$-algebras are the same. Since $R$ is an $\EE_n$-$\fZ_n(R)$-algebra, it follows that it is also an $\EE_n$-$\bW(R^{\flat})$-algebra.
	
	To finish, we now show that $\fZ_n(R)^{\flat} \to R^{\flat}$ is an isomorphism. Surjectivity follows from the lift $\bW(R^{\flat}) \to \fZ_n(R)$ we have already produced. For injectivity, if $a$ is in the kernel, then we consider the map of $\EE_{n+1}$-algebras $g:\fZ_n(R) \to \fZ_n(R)\otimes_{\bW(\FF_p[x]^{\sharp})}\bW(\FF_p)$ where $x$ is sent to $a$. Here $\sharp$ denotes the colimit perfection. Since $\bW(\FF_p)$ is an idempotent algebra over $\bW(\FF_p[x]^{\sharp})$, the target is an idempotent algebra over the source. Since $a$ is in the kernel, we learn that $R^{\flat}$ is an $\EE_n$-$\fZ_n(R)\otimes_{\bW(\FF_p[x]^{\sharp})}\bW(\FF_p)$-algebra, so that $g$ has a section via the universal property of the center. It follows that $a = 0$. 
%
\end{proof}

\begin{remark}
	The lift of $R$ to an $\EE_n$-$\bW(R^{\flat})$-algebra is essentially unique. Indeed, the space $L$ of lifts is the pullback
	\begin{center}
		\begin{tikzcd}
			L\pullback\ar[r]\ar[d] & \Map_{\Alg_{\EE_{n+1}}(\Sp_p)}(\bW(R^{\flat}),\fZ_{\EE_n}(R))\ar[d]\\
			*\ar[r] & \Map_{\Alg_{\EE_n}(\Sp_p)}(\bW(R^{\flat}),R)
		\end{tikzcd}
	\end{center}
	which is contractible since the right vertical map is an isomorphism as $\fZ_{n}(R)^{\flat} \cong R^{\flat}$.
\end{remark}

	The last ingredient for \Cref{thm:perfectoidpi0} is the perfectoid version of the Hopkins--Mahowald theorem, which we now explain. Let $S$ be a perfectoid ring. Then the map $\W(S^{\flat})\to S$ on $\pi_0$ is Fontaine's map $\theta: \Ainf(S) \to S$. The kernel of $\theta$ is a principal ideal, with a generator $\xi$, giving $S = \Ainf(S)/\xi$ as discrete rings. This has the following lift to $\EE_2$-spectral algebraic geometry:
	
\begin{prop}[{\cite[Theorem 1.13]{zhouhang-mao}}]\label{thom-spectra}
Let $S$ be a perfectoid ring. Then there is an isomorphism of $\EE_2$-algebras $\bW(S^{\flat})\sslash_{\EE_2}\xi \cong S$, where $\sslash_{\EE_2}$ is the $\EE_2$-quotient, i.e., the free $\EE_2$-$\bW(S^{\flat})$-algebra equipped with a nullhomotopy of $\xi$.
\end{prop}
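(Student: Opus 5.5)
The approach is to compare both sides through an explicit $\EE_2$-map and then check that this map is an equivalence by a Hopkins--Mahowald style homology computation.

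\emph{Constructing the map.} Since $S$ is discrete and $\EE_\infty$, and $\theta:\W(S^{\flat})\to S$ is the $\pi_0$ of an $\EE_\infty$-map $\bW(S^{\flat})\to S$ (obtained from the counit of \Cref{lem:wittenetale} applied to $S$, using $S^{\flat}\cong(\pi_0 S/p)^{\flat}$), we get an $\EE_2$-$\bW(S^{\flat})$-algebra structure on $S$. In it $\xi$ maps to $0$ in $\pi_0$, so a nullhomotopy of $\xi$ exists. The universal property of the free $\EE_2$-quotient then gives an $\EE_2$-$\bW(S^{\flat})$-algebra map
\[
\phi:\bW(S^{\flat})\sslash_{\EE_2}\xi\to S.
\]
The task is to show that $\phi$ is an equivalence. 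Both sides are $p$-complete (the source after $p$-completion, which I take as part of the definition). So it suffices to check that $\phi$ is an equivalence after applying $\FF_p\otimes-$, or alternatively after base change along the $\EE_\infty$-map $\bW(S^{\flat})\to S^{\flat}$ together with a $\xi$-adic d\'evissage.

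\emph{Thom spectrum description.} Next I would rewrite the source as a Thom spectrum. Because $\xi\in(p,[\varpi])$ is topologically nilpotent, $1-\xi$ is a unit in $\W(S^{\flat})$. It therefore defines a map $S^1\to \B\GL_1(\bW(S^{\flat}))$, which extends uniquely to an $\EE_2$-map $\Omega^2S^3\to \B\GL_1(\bW(S^{\flat}))$. As in Hopkins--Mahowald, the associated Thom spectrum is the free $\EE_2$-$\bW(S^{\flat})$-algebra with $\xi$ killed, i.e.\ $\bW(S^{\flat})\sslash_{\EE_2}\xi$. The Thom isomorphism then identifies its mod $p$ homology over $S^{\flat}$ with
\[
S^{\flat}\otimes H_*(\Omega^2S^3;\FF_p),
\]
on which the $\EE_2$-Dyer--Lashof operations act. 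In the classical case ($S=\Z_p^{\cyc}$-type with $\xi=p$) the top operation $Q_1$ applied to the Thom class is computed from the $\EE_2$-structure: $Q_1(\text{Thom class})$ is related to the Bockstein/Frobenius of the class of $\xi$.

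\emph{Main obstacle: the generalized Hopkins--Mahowald computation.} The key point is to show that the polynomial generators $x_1, x_2,\dots$ of $H_*(\Omega^2S^3)$ (iterated $Q_1$'s on the degree-one class) are sent under $\phi_*$ to exactly the generators $\bar\tau_i$ (or $\xi_i$) of the dual Steenrod algebra appearing in $H_*(S;\FF_p)\cong (S/p)\otimes\mathcal{A}_*$-type answers. Equivalently, it suffices that the first one is hit, since the rest follow from Dyer--Lashof compatibility. Here perfectness is used crucially. Write $\xi=[\varpi]+p v$ with $v$ a unit. Then $\xi$ is ``distinguished'', i.e.\ $\delta(\xi)$ is a unit. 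This should be precisely the condition making the degree-one class hit a unit multiple of $\bar\tau_0$, mirroring $\delta(p)=1-p^{p-1}$ in the classical case; the Frobenius on $S^{\flat}$ being bijective lets one propagate this through the $Q_1$-iterates. To organize the computation, I would first try to reduce via base change to a universal case. After rescaling $\xi$ by a unit (which does not change the $\EE_2$-quotient), one would pull back along an $\EE_\infty$-map from $\bW$ of a perfected polynomial ring $\FF_p[a^{1/p^\infty},\dots]$. The $\EE_2$-quotient commutes with base change of $\EE_\infty$-rings, and $S$ is the corresponding base change of the universal perfectoid, since perfectoid quotients are Tor-independent base changes of prisms. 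If this universal reduction is awkward, the fallback is to carry out the homology computation directly over $S^{\flat}$, using only that $\delta(\xi)$ is a unit.
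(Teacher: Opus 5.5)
\textbf{There is a genuine gap: the Thom isomorphism step fails.} Your construction of $\phi$ is fine. So is the identification of the source with the Thom spectrum of the $\EE_2$-map $\Omega^2S^3\to\B\GL_1(\bW(S^\flat))$ extending $1-\xi$. The step that is supposed to show $\phi$ is an equivalence does not work.

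After applying $\FF_p\otimes-$, the ring $\FF_p\otimes\bW(S^\flat)$ is the discrete ring $S^\flat$. The composite $\Omega^2S^3\to\B\GL_1(S^\flat)$ restricts on the bottom cell to the unit $1-\bar\xi$, where $\bar\xi$ is the image of $\xi$ in $S^\flat=\W(S^\flat)/p$. This unit is $\neq 1$ unless $\xi\in(p)$, i.e.\ unless $S$ has characteristic $p$. So there is no Thom class, and the claimed isomorphism of the mod $p$ homology with $S^\flat\otimes H_*(\Omega^2S^3;\FF_p)$ is false. Already $\pi_0$ is the coinvariants $S^\flat/\bar\xi\cong S/p$, not $S^\flat$.

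The matching you aim for is also wrong. For $p$-torsion-free $S$, $H_*(S;\FF_p)\cong S/p\otimes_{\FF_p}H_*(\ZZ;\FF_p)$ vanishes in degree $1$. So the bottom cell does not go to a multiple of $\bar\tau_0$; it kills $\bar\xi$ in degree $0$. Your Dyer--Lashof picture is accurate only when $\bar\xi=0$. In that case the statement is just Hopkins--Mahowald base changed along $\SP_p\to\bW(S^\flat)$, and that, not $\ZZ_p^{\cyc}$, is the ``classical case'' with $\xi=p$. Beyond this, the paragraph on the main obstacle is a hope rather than an argument. Passing to a universal perfectoid relocates the problem without solving it.

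\textbf{The missing idea is to kill the Teichm\"uller part of $\xi$ rather than compute homology.} You already wrote $\xi=[\varpi]+pv$ with $v$ a unit. The element $[\varpi]$ is $\EE_\infty$-strict in $\bW(S^\flat)$, via $\bW(\FF_p[t^{1/p^\infty}])\to\bW(S^\flat)$. So you may base change along $\SP[t]\to\SP$, $t\mapsto 0$. Free $\EE_2$-quotients commute with base change along $\EE_\infty$-maps, and in $B=\bW(S^\flat)\otimes_{\SP[t]}\SP$ the class $\xi$ becomes $pv$. Hence, by Hopkins--Mahowald,
\[ B\sslash_{\EE_2}\xi\simeq B\sslash_{\EE_2}p\simeq B\otimes_{\SP_p}(\SP_p\sslash_{\EE_2}p)\simeq B\otimes_{\SP_p}\FF_p, \]
with homotopy $\W(S^\flat)/^{L}(p,[\varpi])$. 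The target becomes $S\otimes_{\SP[t]}\SP\simeq \W(S^\flat)/^{L}(\xi,[\varpi])$. These agree because $(\xi,[\varpi])=(pv,[\varpi])$, and the map is the identity on $\pi_0$. If you work in a universal case as in your last paragraph, with $\varpi$ a nonzerodivisor, both sides are discrete and there is nothing more to check.

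Finally, $\W(S^\flat)$ is $(p,\xi)=(p,[\varpi])$-complete, and the source has finitely many $\bW(S^\flat)$-cells in each degree. So both sides are derived $[\varpi]$-complete, and derived Nakayama shows $\phi$ is an equivalence. This is the same pattern as the paper's proof of Proposition~\ref{power-series-algebra}, which reduces modulo the strict element $q-1$ to Hopkins--Mahowald. For the present statement the paper itself gives no argument beyond citing Mao, noting only that the case $S=\FF_p$ is Hopkins--Mahowald.
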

	For $S = \FF_p$, this is the Hopkins--Mahowald theorem \cite[Theorem 5.1]{antolin2019simple}.

\begin{proof}[Proof of \Cref{thm:perfectoidpi0}]
	Because we have a map $\pi_0S \to \pi_0R$, we have a map $\bW(S) \to \bW(R)$, 
	making $R$ an $\EE_2$-$\bW(S)$-algebra. Since $d$ is sent to $0$ in $R$,  \Cref{thom-spectra} produces an $\EE_2$-algebra map $S \to R$.
\end{proof}

\section{Strict elements}\label{section:strict}

We now show that there are strong restrictions on $\EE_n$-algebras with \textit{strict} roots of unity for $n>1$. We first define the notion of an $\EE_m$-strict element of an $\EE_n$-algebra, which is inductive on $m-n$.
\begin{definition}
	Let $R$ be an $\EE_n$-algebra, $x \in \pi_0R$, and $m \in \NN\cup \infty$. We say $x$ is an $\EE_m$-\textit{strict
	element} if
\begin{itemize}
	\item If $m \leq n$, we ask for the existence of an $\EE_m$-algebra map $\SP[t] \to R$ sending the generator $t$ to $x$.
	\item If $n<m<\infty$, we ask that $x$ lift to an $\EE_m$-strict element of $\pi_0\fZ_{n}R$.
	\item If $m = \infty$, we ask for $x$ to be $\EE_k$-strict for all $k <\infty$ compatibly in $k$.
\end{itemize}
\end{definition}

For example, \Cref{prop:centrality} can be interpreted as saying that when $x$ lifts to $\W(R^{\flat})$, it is $\EE_{\infty}$-strict. If $x$ is $\EE_m$-strict, then so are all of the powers of $x$ (including negative powers, if they exist). There is also a variation of the notion of $\EE_m$-strict, where for an operad $\cO \to \EE_n$, we say that an element $x$ in $\pi_0$ of an $\EE_n$-algebra $R$ is $\cO$-strict if there is an $\cO$-algebra map $\SP[t] \to R$ sending the generator $t$ to $x$.

\begin{prop}\label{power-series-algebra}
    Suppose that $p^k \neq 1,2$.
	Let $R$ be a $p$-complete $\EE_2$-ring with a primitive $p^k$th root of unity $\zeta_{p^k}$ which is $\EE_3$-strict. Then $R$ admits an $\EE_2$-map from $\Z_p[\zeta_{p^k}]$, and in particular is an $\EE_1$-$\Z_p[\zeta_{p^k}]$-algebra.
\end{prop}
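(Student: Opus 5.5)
The plan is to mimic the proof of \Cref{thm:perfectoidpi0}, with the perfectoid input replaced by a strict map from a group ring, and the strictness hypothesis supplying the centrality that \Cref{prop:centrality} supplied before.

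\textbf{Step 1: use strictness to get a central map from a spherical group ring.} Since $\zeta=\zeta_{p^k}$ is $\EE_3$-strict, by definition it lifts to an element of $\pi_0\fZ_{\EE_2}(R)$ that is $\EE_3$-strict there. Concretely, this gives an $\EE_3$-map $\SP[t]\to\fZ_{\EE_2}(R)$ with $t\mapsto\zeta$.

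Because $\zeta$ is a unit, this map factors through the localization $\SP[t^{\pm1}]=\SP[\Z]$. The composite $\EE_3$-map $\SP[\Z]\to \fZ_{\EE_2}(R)$ corresponds to an $\EE_3$-map of group-like $\EE_3$-spaces $\Z\to \GL_1\fZ_{\EE_2}(R)$.

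Since $\zeta^{p^k}=1$ in $\pi_0$, I want to extend this along $\Z\to\Z/p^k$. I do not expect to get an $\EE_3$ extension in general. Instead, I will work $p$-completely with the $p$-complete ring $\SP[\Z]^\wedge_p\cong \SP[\Z_p]$-type objects. The aim is to produce an $\EE_3$-map from
\[ A:=\SP[t^{\pm1}]^\wedge_p\sslash_{\EE_3}(\Phi\text{-type relation}) \]
or, more simply, to keep the $\EE_3$-map $\SP[t^{\pm 1}]^\wedge_p\to\fZ_{\EE_2}(R)$ and impose the relation later, at the $\EE_2$-level.

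\textbf{Step 2: reduce to a quotient statement analogous to \Cref{thom-spectra}.} Via the $\EE_3$-map to the center, $R$ becomes an $\EE_2$-$\SP[t^{\pm1}]^\wedge_p$-algebra, in the same way that \Cref{prop:centrality} made $R$ an $\EE_2$-$\bW(R^\flat)$-algebra. The image of $\Phi_{p^k}(t)$ is $0$ in $\pi_0R$. Hence $R$ receives an $\EE_2$-$\SP[t^{\pm1}]^\wedge_p$-algebra map from the $\EE_2$-quotient $\SP[t^{\pm1}]^\wedge_p\sslash_{\EE_2}\Phi_{p^k}(t)$.

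It therefore suffices to show that this $\EE_2$-quotient admits an $\EE_2$-map to, or is identified with, something receiving a map from $\Z_p[\zeta_{p^k}]$. In other words, I need a Hopkins--Mahowald-type statement: $\SP[t^{\pm1}]^\wedge_p\sslash_{\EE_2}\Phi_{p^k}(t)\simeq \Z_p[\zeta_{p^k}]$, or at least that $\Z_p[\zeta_{p^k}]$ is an $\EE_2$-retract-type target.

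\textbf{Step 3: compute the quotient (the main obstacle).} I would first try to pass to the perfectoid situation. Base change along $\SP[t^{\pm1}]^\wedge_p\to \SP[t^{\pm1/p^\infty}]^\wedge_p=\bW(\FF_p[t^{\pm1/p^\infty}])$ is harmless once the relevant roots exist. The $\EE_2$-quotient by $\Phi_{p^k}(t)$ over this spherical Witt vector ring is then handled by \Cref{thom-spectra}, because $\Phi_{p^k}$ of the Teichmüller element $[\epsilon^{1/p^{k-1}}]$ is a Fontaine generator $\xi$ for $\Z_p^{\cyc}$ at level $k$. This gives an $\EE_2$-map from a perfectoid ring whose $\theta$ factors through $\Z_p[\zeta_{p^k}]$.

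The hard part is descending from the perfectoid cover back to $\Z_p[\zeta_{p^k}]$ itself. The $\EE_3$-strictness, rather than $\EE_2$, should enter exactly here: it lets us adjoin $p$-power roots of $t$ centrally. For that we need the $\EE_3$-structure on the center, so that the localized/extracted-root algebra still maps centrally, using that $t\mapsto t^p$ is $\EE_\infty$ on $\SP[t]$ and $p$-adically close to étale after completion in the appropriate sense.

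The hypothesis $p^k\neq 1,2$ should be used to guarantee that $\Phi_{p^k}(t)$ is coprime to $t-1$ modulo the relevant ideal, so that the quotient is genuinely $\Z_p[\zeta_{p^k}]$ and not polluted by the trivial root. I expect this descent-and-identification to be the main difficulty. If it fails, the fallback is a direct cell-by-cell obstruction argument: compare $\SP[t^{\pm1}]^\wedge_p\sslash_{\EE_2}\Phi_{p^k}$ with $\Z_p[\zeta_{p^k}]$ via the $\EE_2$-cotangent complex, and show that the obstructions vanish using the vanishing from \Cref{lem:wittenetale}.
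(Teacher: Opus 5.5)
Your Steps 1 and 2 match the paper. The $\EE_3$-map $\SP[t]\to\fZ_{\EE_2}(R)$ makes $R$ an $\EE_2$-$\SP_p[t]$-algebra, so $R$ receives an $\EE_2$-map from $Q=\SP_p[t]\sslash_{\EE_2}\Phi_{p^k}(t)$. Inverting $t$ and the detour through $\GL_1$ and $\Z/p^k$ are not needed.

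However, the proposal stops at the only substantive point. You never establish $Q\simeq\Z_p[\zeta_{p^k}]$, and Step 3 explicitly leaves the ``descent-and-identification'' open. Your fallback also does not apply: the vanishing in \Cref{lem:wittenetale} concerns $\bW$ of perfect rings, and $\SP_p[t]$ is not formally \'etale. So there is a genuine gap, and your diagnosis of it is off in two ways.
\begin{enumerate}
\item $Q$ is a universal object independent of $R$. The $\EE_3$-strictness has already done all its work in Step 2 and plays no role in computing $Q$; nothing needs to be extracted ``centrally''.
\item The hypothesis is not about coprimality of $\Phi_{p^k}(t)$ and $t-1$. In fact $\Phi_{p^k}(t)\equiv(t-1)^{(p-1)p^{k-1}}\pmod p$, and this congruence is exactly what the argument needs.
\end{enumerate}

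The missing idea is to reduce along $t\mapsto1$ rather than pass up to a perfectoid cover.
\begin{enumerate}
\item Since $\Z_p[\zeta_{p^k}]$ is an $\EE_\infty$-$\SP_p[t]$-algebra in which $\Phi_{p^k}(t)=0$, there is a comparison $\EE_2$-map $Q\to\Z_p[\zeta_{p^k}]$.
\item Both sides are $p$-complete, hence $(t-1)$-complete by the congruence above.
\item So it suffices to check the map is an equivalence after base change along $\SP_p[t]\to\SP_p$, $t\mapsto 1$.
\item Because $\Phi_{p^k}(1)=p$ for $k\geq1$, this base change is $\SP_p\sslash_{\EE_2}p\to\Z_p[\zeta_{p^k}]/(\zeta_{p^k}-1)=\FF_p$. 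This is an equivalence by Hopkins--Mahowald; this is \cite[Theorem 1.19]{zhouhang-mao}.
\end{enumerate}

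Your perfectoid route can also be closed, but by a different observation than the one you anticipate.
\begin{enumerate}
\item $\SP_p[t^{\pm1}]$ is a ($p$-completed) module summand of $\SP_p[t^{\pm1/p^\infty}]$, so base change along this map detects equivalences.
\item After base change, the target becomes $\Z_p^{\cyc}$, using $\Phi_{p^k}(s^{p^m})=\Phi_{p^{k+m}}(s)$.
\item The source also becomes $\Z_p^{\cyc}$ by \Cref{thom-spectra}, once you note that the $\EE_2$-quotient is automatically $(t-1)$-complete, so it agrees with $\bW((\Z_p^{\cyc})^\flat)\sslash_{\EE_2}\xi$.
\end{enumerate}
This works, but it is more roundabout than the direct reduction to Hopkins--Mahowald.
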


\begin{proof}
	Since $\zeta_{p^k}$ is an $\EE_3$-strict element, there is an $\EE_3$-map $\SP[q] \to \fZ_{\EE_2}R$ which makes $R$ into an $\EE_2$-$\SP[q]$-algebra.
	The claim follows once we show that the free $\EE_2$-$\SP_p[q]$-algebra with $q$ a root of unity in $\pi_0$ is $\Z_p[\zeta_{p^k}]$. This is \cite[Theorem 1.19]{zhouhang-mao}, but we quickly recall the argument for completeness. There is an $\EE_2$-map $\SP_p[q]\sslash_{\EE_2}\Phi_{p^k}(q) \to \Z_p[\zeta_{p^{n+1}}]$. Both target and source are complete with respect to $q-1$, so it suffices to check that this map is an equivalence after base-changing along the map $\SP_p[q] \to \SP_p$ sending $q\mapsto 1$. After base-change, this becomes the map $\SP_p\sslash_{\EE_2}p \to \FF_p$, which is an equivalence by the Hopkins--Mahowald theorem.
\end{proof}


%

If $R$ is an even $\EE_2$-algebra, there are no obstructions to any $x \in \pi_0R$ being $\EE_2$-strict, since the cells of $\SP[t]$ as an $\EE_2$-ring are in even degrees. As a consequence we can adjoin any algebraic equation\footnote{One can also add multiple generators and relations at the same time.} $R[x]/(f)$ as a basechange $R[x]\otimes_{\SP_p[f]} \SP$, in particular allowing us to add roots of unity as an $\EE_1$-algebra (see also \cite[Theorem 3.1]{angeltveit2008topological}).

Our next goal is to show using the Rezk logarithm \cite{rezklog} that when $R$ is a $K(1)$-local $\EE_{\infty}$-ring, any unit which is $\EE_2\otimes \AA_2$-strict (for example an $\EE_3$-strict element) is $\EE_{\infty}$-strict. Recall that the $\pi_0$ of $K(1)$-local $\EE_{\infty}$-rings naturally admits the structure of a $\delta$-ring, with lift of Frobenius $\psi(x) = x^p + p\delta(x)$, and that $L_{K(1)}\SP\{t\}$ is the free polynomial algebra on $\delta^i(t)$ for $i\geq 0$ \cite{hopkins2014k}.

We first record a well known lemma. Let $\SP\{t\}$ denote the free $\EE_{\infty}$-ring on a class $t$ in degree $0$.
\begin{lemma}\label{lem:k1localstrict}
	There is an equivalence of $\EE_\infty$-rings $L_{K(1)}\SP[t]\cong L_{K(1)}\SP\{t\}\sslash_{\EE_{\infty}}\delta(t)$. Here, the quotient on the right-hand side is taken in $K(1)$-local $\EE_{\infty}$-rings.
\end{lemma}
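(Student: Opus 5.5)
We will construct a comparison map and check it on homotopy groups. $\SP[t]=\Sigma^\infty_+\NN$ is an $\EE_\infty$-ring, so the generator $t$ gives an $\EE_\infty$-map $\SP\{t\}\to\SP[t]$. After $K(1)$-localization, $\pi_0 L_{K(1)}\SP[t]$ is a $\delta$-ring. The element $t$ there is the image of the strict unit-like element $1\in\NN$, an "honest" element of a spherical monoid ring. For such elements the power operation $\psi$ is the monoid $p$-th power map: on $\Sigma^\infty_+ M$ the $\EE_\infty$ power operation $\theta^p$ on a group-like/monoid element $m$ is the image of the diagonal $m\mapsto m^{\otimes p}$, so $\psi(t)=t^p$. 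Hence $p\delta(t)=0$, and since $\pi_0L_{K(1)}\SP[t]=\Z_p[t]^\wedge_p$ is $p$-torsion-free, $\delta(t)=0$. This gives a null-homotopy of $\delta(t)$. By the universal property of the $K(1)$-local $\EE_\infty$-quotient, we get an $\EE_\infty$-map
$$\phi: L_{K(1)}\SP\{t\}\sslash_{\EE_\infty}\delta(t)\to L_{K(1)}\SP[t].$$

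To show $\phi$ is an equivalence, we compute the source. Use the description $L_{K(1)}\SP\{t\}$ as $K(1)$-local free with $\pi_*$ equal to $\pi_*L_{K(1)}\SP$ tensored with the $p$-completed polynomial ring $\Z_p[t,\delta t,\delta^2 t,\dots]$, following \cite{hopkins2014k}. The quotient $\sslash_{\EE_\infty}\delta(t)$ is the pushout
$$L_{K(1)}\SP\{t\}\otimes_{L_{K(1)}\SP\{y\}} L_{K(1)}\SP,$$
along $y\mapsto\delta(t)$ and $y\mapsto 0$. Since $L_{K(1)}\SP\{y\}$ is free on $y,\delta y,\dots$, the map $y\mapsto\delta(t)$ sends $\delta^i y\mapsto\delta^{i+1}t$. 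This exhibits $L_{K(1)}\SP\{t\}$ as flat (indeed free polynomial) over $L_{K(1)}\SP\{y\}$ on the generator $t$. Thus the pushout has homotopy $\pi_*L_{K(1)}\SP\otimes\Z_p[t]^\wedge_p$, computed by a Künneth/Tor spectral sequence that collapses because the $\delta^{i+1}t$ form a regular sequence. Strictly, these are $K(1)$-local statements, so we should run this argument after smashing with $\KU$, or working with $K(1)$-local $\KU_p$-homology, where the relevant rings are flat and the algebra is clean.

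Finally, $\phi$ sends $t\mapsto t$ and is a map of $L_{K(1)}\SP$-modules. On $\KU_p$-homology it is therefore the identity map on $\KU_{p*}\otimes\Z_p[t]^\wedge_p$, hence an isomorphism. Since both sides are $K(1)$-local, $\phi$ is an equivalence.

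The main obstacle is the first step: justifying $\psi(t)=t^p$ in $L_{K(1)}\SP[t]$, i.e.\ that monoid elements of spherical monoid rings have trivial $\delta$. I would argue via naturality from $\SP[\Z]\supset$ group-like elements, or directly from the fact that the total power operation on $\Sigma^\infty_+M$ is induced by $M\to (M^{p})_{h\Sigma_p}$, $m\mapsto(m,\dots,m)$, which factors through $M\times B\Sigma_p$. A secondary technical point is making the flatness/Tor computation rigorous in the $K(1)$-local setting.
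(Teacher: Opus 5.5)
Your argument follows the paper's proof: you build the comparison map from $\delta(t)=0$, use McClure's description of $\pi_*L_{K(1)}\SP\{t\}$ as a completed polynomial ring on the $\delta^i(t)$, note that $y\mapsto\delta(t)$ sends $\delta^i(y)\mapsto\delta^{i+1}(t)$, and let the completed bar/K\"unneth spectral sequence collapse to $\pi_*L_{K(1)}\SP\hat\otimes\Z_p[t]$.

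One step is false as written. You deduce $\delta(t)=0$ from $p\delta(t)=0$ together with torsion-freeness of $\pi_0L_{K(1)}\SP[t]$. At $p=2$, however, $\pi_0L_{K(1)}\SP\cong\Z_2\oplus\Z/2$, so $\pi_0L_{K(1)}\SP[t]$ has $2$-torsion and the deduction fails. The paper needs the lemma at $p=2$, for instance for $\Phi_{2^k}$ with $k\geq 2$.

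The fix is already in your own computation. You showed that the total power operation $P(t)$ equals $t^p$ pulled back along $B\Sigma_p\to\ast$. For $K(1)$-local $E$, $\pi_0E^{B\Sigma_p}$ is free over $\pi_0E$ on $1$ and the transfer class $\tau$, because $L_{K(1)}\Sigma^\infty_+B\Sigma_p$ splits via the collapse and the transfer. The operation $\delta$ is, up to sign, the $\tau$-coefficient of $P$; this is how it is defined at $p=2$, where one cannot divide by $p$. Since $P(t)=t^p\cdot 1$ has no $\tau$-component, $\delta(t)=0$ follows directly, with no division by $p$.

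The paper simply asserts $\delta(t)=0$. With this correction, your write-up supplies a justification the paper omits.
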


\begin{proof}
	Since $\delta(t) = 0$ in $L_{K(1)}\SP[t]$, there is an $\EE_\infty$-map 
	$L_{K(1)}\SP\{t\}\sslash_{\EE_{\infty}}\delta(t) \to L_{K(1)}\SP[t]$. To see this 
	is an equivalence, McClure's theorem \cite[Chapter IX]{bmms} (see also \cite{hopkins2014k}) shows that $\pi_*L_{K(1)}\SP\{t\}$ is a $p$-completed polynomial ring over $\pi_*L_{K(1)}\SP$ on $\delta^i(t)$ for $i\geq 0$. 
	Since $L_{K(1)}\SP\{t\}\sslash_{\EE_{\infty}}\delta(t) \cong L_{K(1)}\SP\{t\} \hat{\otimes}_{L_{K(1)}\SP\{\delta(t)\}} L_{K(1)}\SP$, it follows from the $p$-completed bar spectral sequence that the map is an equivalence.
\end{proof}

\begin{prop}\label{log}
Let $A$ be a $K(1)$-local $\Eoo$-$\KU_p$-algebra with $\pi_0(A)$ torsion free, and let $\xi \in \pi_0(A)^{\times}$. If $\xi$ is $\EE_2\otimes \AA_2$-strict, then it is $\EE_{\infty}$-strict. In particular, $\delta(\xi) = 0$.
\end{prop}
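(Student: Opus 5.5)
The plan is to reduce $\EE_\infty$-strictness to the single equation $\delta(\xi)=0$, and to prove that equation with the Rezk logarithm.

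\emph{Step 1 (reduction to $\delta(\xi)=0$).} By \Cref{lem:k1localstrict}, and its evident variant for $L_{K(1)}\SP[t^{\pm1}]$ obtained by inverting $t$, a $K(1)$-local $\EE_\infty$-map $L_{K(1)}\SP[t^{\pm 1}]\to A$ sending $t\mapsto \xi$ is the same as an $\EE_\infty$-map $L_{K(1)}\SP\{t\}[t^{-1}]\to A$ sending $t\mapsto\xi$, together with a nullhomotopy of the image of $\delta(t)$. The first map always exists since $\xi$ is a unit. For the nullhomotopy, note that $\pi_0A$ is torsion free and $A$ is a $\KU_p$-algebra. Given $\delta(\xi)=0$, I will use $\pi_0A$ torsion free to produce the nullhomotopy, and the $\KU_p$-algebra structure to remove any higher obstruction. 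So the whole statement reduces to showing $\delta(\xi)=0$, equivalently $\psi(\xi)=\xi^p$.

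\emph{Step 2 (strictness as a map out of a space).} An $\EE_m$-strict unit is a map of $\EE_m$-spaces $\ZZ\to \GL_1A$, i.e.\ a pointed map $B^m\ZZ\to B^m\GL_1A$. For the operad $\EE_2\otimes\AA_2$ I expect the analogous statement to be an extension of the $\EE_2$-structure, i.e.\ of the map $\CP^\infty=B^2\ZZ\to B^2\GL_1A$, over the ``$\AA_2$-stage'' of the bar construction: a map $X\to B^2\GL_1 A$ from a space $X$ built from $\Sigma\CP^\infty$ together with one extra cell coming from $\Sigma\CP^\infty\wedge\CP^\infty$. Passing to spectra through $\Sigma^\infty X\to \Sigma^2 gl_1A$ and composing with the Rezk logarithm $\ell\colon gl_1A\to L_{K(1)}gl_1A\simeq L_{K(1)}A$, I obtain a map of spectra $L_{K(1)}\Sigma^{\infty}X\to \Sigma^2 A$. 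Its restriction to the bottom cell is $\ell(\xi)=\tfrac1p\log(\xi^p/\psi(\xi))$, viewed in $\pi_0A$, which sits in $\pi_0A\otimes\QQ$ since $\pi_0A$ is torsion free.

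\emph{Step 3 (the obstruction computation, main obstacle).} The class $\ell(\xi)$ must extend over the attaching maps of the higher cells of $L_{K(1)}\Sigma^\infty X$. Using $\KU_p$-orientability, $\KU_p\otimes\Sigma^\infty\CP^\infty$ is computed by the $\KU$-homology of $\CP^\infty$, and the Adams operations act on the cells through the characters $\psi^k\beta^j = k^j\beta^j$. The extension condition then becomes a linear condition on $\ell(\xi)$ in the torsion-free group $\pi_0A$. The hard part is to identify the $\EE_2\otimes\AA_2$ cell structure precisely, and to check that the resulting $K(1)$-local $e$-invariant of the relevant attaching map is a non-zero-divisor. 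This is where the assumption of $\EE_2\otimes\AA_2$ rather than merely $\EE_2$ is used: for $\EE_2$ alone, the even cells impose no condition. I would first try to compare with the known fact that $\ell$ kills $\EE_\infty$-strict units, running the computation on the universal example $L_{K(1)}\SP\{t\}[t^{-1}]\sslash\delta(t)$ versus $L_{K(1)}\SP[t^{\pm1}]$. Granting the calculation, I get $\ell(\xi)=0$. Hence $\log(\xi^p/\psi(\xi))=0$ in $\pi_0A\otimes\QQ$, so $\psi(\xi)=\xi^p\cdot\omega$ with $\omega\equiv 1\pmod p$ and $\log\omega=0$. By torsion-freeness this forces $\omega=1$, i.e.\ $p\delta(\xi)=0$, and so $\delta(\xi)=0$. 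Step 1 then finishes the proof.
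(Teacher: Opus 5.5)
\textbf{Gap: the step that forces $\ell(\xi)=0$ is missing.} Your outline has the same architecture as the paper's proof. You reduce to $\delta(\xi)=0$ via \Cref{lem:k1localstrict}, view the strict unit as an $\AA_2$-map out of $\B^2\ZZ=\CP^\infty$, apply the Rezk logarithm, and finish with Rezk's formula. But the step that carries the content, namely why the $\AA_2$-structure forces $\ell(\xi)=0$, is only ``granted''. The mechanism you guess for it also has the wrong shape: the constraint is not a single linear condition $c\cdot\ell(\xi)=0$ coming from one attaching map with a non-zero-divisor $e$-invariant.

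The missing idea is a formal group computation. Compose the $\AA_2$-map $\CP^\infty\to\B^2\GL_1(A)$ with the infinite loop map $\ell\colon\B^2\GL_1(A)\to\Omega^\infty\Sigma^2A$, and use the orientation and Bott class. This gives a power series $f\in x\,\pi_0A\pw{x}$ whose linear coefficient is $\ell(\xi)$. Since $\ell$ is an infinite loop map, the $\AA_2$-homotopy says exactly that $f$ is primitive, $f(x+_Fy)=f(x)+f(y)$. In other words, $f$ is a homomorphism from the multiplicative formal group of $A$ to the additive one. Over the torsion-free ring $\pi_0A$ this forces $f$ to be $\ell(\xi)$ times the logarithm of $F$, whose $x^n$-coefficient is $\pm\ell(\xi)/n$. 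Integrality then gives $p^k\mid\ell(\xi)$ for every $k$. Since $\pi_0A$ is torsion-free and $p$-complete, hence $p$-adically separated, $\ell(\xi)=0$. A finite skeleton $\CP^N$ only yields $p^{\lfloor\log_p N\rfloor}\mid\ell(\xi)$. So the vanishing genuinely uses all of $\CP^\infty$ together with separatedness, which your sketch never brings in at this point.

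\textbf{Smaller points.}

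In Step~2 the indexing is off by one. With $X=\Sigma\CP^\infty\cup C(\Sigma\CP^\infty\wedge\CP^\infty)$ (the projective plane of $\CP^\infty$), an $\AA_2$-map $\CP^\infty\to\B^2\GL_1(A)$ corresponds to a map $X\to\B^3\GL_1(A)$. Extending over the cone after applying $\ell$ is exactly the primitivity of $f$, so your cell picture, done correctly, leads to the same computation.

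In Step~1 no hypotheses are needed. By \Cref{lem:k1localstrict} there is a single cell attached along $\delta(t)\in\pi_0$, so $\delta(\xi)=0$ in $\pi_0A$ is exactly the condition, with no higher obstructions.

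Your last deduction ($\omega\equiv1 \bmod p$ and $\log\omega=0$ imply $\omega=1$) is correct for odd $p$, where $\exp$ converges on $p\,\pi_0A$. It fails at $p=2$. For example, $\xi=-1\in\pi_0\KU_2$ has $\psi(\xi)=-1$ and $\ell(\xi)=0$ (as $2\ell(-1)=\ell(1)=0$), while $\delta(\xi)=-1$. The paper's ``multiply by $p$ and exponentiate'' step has the same limitation. So at $p=2$ an additional argument is needed to exclude $\psi(\xi)=\omega\xi^2$ with $\omega^2=1$, $\omega\neq1$.
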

\begin{proof} 
	By \Cref{lem:k1localstrict}, it suffices to show that $\delta(\xi) = 0$ if $\xi$ is $\EE_2\otimes \AA_2$-strict.
	We begin by observing that $$\Map_{\Alg_{\EE_2\otimes \AA_1}(\Sp)}(\SP[t],A) \cong \Map_{\Alg_{\EE_2\otimes \AA_2}(\cS)}(\NN,\Omega^{\infty}A)$$ where the target has the $\EE_{\infty}$ structure coming from multiplication. Since $\xi$ is in a unit, using $(-)_{\xi}$ to denote the component sending $1$ to $\xi$, we get
	$$\Map_{\Alg_{\EE_2\otimes \AA_1}(\Sp)}(\SP[t],A)_{\xi} \cong \Map_{\Alg_{\EE_2\otimes \AA_2}(\cS)}(\Z,\GL_1A)_{\xi} \cong \Map_{\Alg_{\AA_2}(\cS)}(B^2\Z,B^2\GL_1(A))_{\xi}$$ 
In other words, we need to show that given a commutative diagram of the form
$$\xymatrix{
\CP^\infty \times \CP^\infty \ar[d]^-\mu \ar[r]^-{\xi,\xi} & \B^2\GL_1(A) \times \B^2\GL_1(A) \ar[d] \\
\CP^\infty \ar[r]_-{\xi} & \B^2\GL_1(A).
}$$
$\delta(\xi) = 0$. 

To understand this, we will use the $K(1)$-local logarithm, which defines an infinite loop map $\ell: \B^2\GL_1(A) \to \Omega^\infty \Sigma^2 A$ since $A$ is $K(1)$-local. After composing with the logarithm, we obtain a diagram
$$\xymatrix{
\Sigma^\infty_+(\CP^\infty \times \CP^\infty) \ar[d]^-\mu \ar[r]_-{\ell(\xi), \ell(\xi)} & \Sigma^2(A \oplus A) \ar[d] \ar[r]^-\sim & A \oplus A \ar[d] \\
\Sigma^\infty_+ \CP^\infty \ar[r]_-{\ell( \xi)} & \Sigma^2 A \ar[r]_-\sim & A.
}$$
The final column uses multiplication by $\beta$ to identify $\Sigma^2 A \simeq A$. 
Since $A$ is complex orientable and $2$-periodic, a choice of orientation and periodicity generator gives an isomorphism $[\Sigma^{\infty}_+\CP^{\infty},L_{K(1)}A] \cong \pi_0L_{K(1)}A\pw{x}$, to which $\ell(\xi)$ corresponds to a power series $f$. The commutativity of the diagram implies that $f$ is a homomorphism from the formal group of $A$ to the additive formal group.

Over a torsion free ring, up to rescaling the Lie algebra, there is a unique homomorphism between any two formal groups, which to the additive formal group is given by the logarithm. In the case of $A$, which is a $\KU_p$-algebra, this is given by
%
$f(x) = a\sum_{n\geq 1} \frac{x^n}{n}$. Since $\pi_0A$ is $p$-torsion free, and $A$ is $p$-complete, $\pi_0A$ is $p$-adically separated, so $a = 0$ for $f$ to be an integral power series, i.e $\ell(\xi) = 0$.

By \cite[Theorem 1.9]{rezklog}, we have
$$0 = \ell(\xi) = \sum_{n\geq 1} (-1)^n \frac{p^{n-1}}{n} \left(\frac{\delta(\xi)}{\xi^p}\right)^n = \frac{1}{p} \log\left(\frac{\xi^p}{\psi(\xi)}\right).$$
Using that $p\in \pi_0(A)$ has divided powers, we can multiply by $p$ and exponentiate to get
$$1 = \exp\left(p\ell(\xi)\right)
= \sum_{n\geq 0} \ell(\xi)^n \frac{p^n}{n!} = \frac{\xi^p}{\psi(\xi)} $$
Thus $\psi(\xi) = \xi^p+p\delta(\xi) = \xi^p$ so $\delta(\xi) = 0$.
\end{proof}

As discussed earlier, we can adjoin primitive $p^k$th roots of unity to even algebras (such as $\KU$) as an $\EE_1$-ring by using the fact that the element $\Phi_{p^k}(q)$ is $\EE_2$-strict in $\KU[q]$. The following corollary shows that this method cannot be improved even to get an $\EE_1\otimes \AA_2$-structure, and nor can any other similar kind of strict quotient.
\begin{corollary}\label{cor:e1a2}
	The element $\Phi_{p^k}(q)$ is not $\EE_2\otimes \AA_2$-strict in $\KU[q]^{\wedge}_p$ if $p^k \neq 1,2$. 
\end{corollary}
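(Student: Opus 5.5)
Suppose for contradiction that $\Phi_{p^k}(q)$ is $\EE_2\otimes\AA_2$-strict in $\KU[q]^\wedge_p$, i.e.\ there is an $\EE_2\otimes\AA_2$-algebra map $\SP[t]\to\KU[q]^\wedge_p$ sending $t\mapsto\Phi_{p^k}(q)$. We will produce from this a $K(1)$-local $\EE_\infty$-$\KU_p$-algebra with torsion-free $\pi_0$ containing a strict unit that violates \Cref{log}. Since $\Phi_{p^k}(q)$ is not a unit, we first pass to an algebra where it becomes one.

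The first step is to choose the target. Let $A$ be the $K(1)$-localization (equivalently, $p$-completion) of $\KU[q][\Phi_{p^k}(q)^{-1}]$. This is an $\EE_\infty$-$\KU_p$-algebra: $\KU[q]$ is $\EE_\infty$, and inverting an element and $p$-completing preserve $\EE_\infty$-structures. Its $\pi_0$ is the $p$-completion of $\Z_p[q][\Phi_{p^k}(q)^{-1}]$, which is $p$-torsion free because $\Phi_{p^k}$ is not divisible by $p$. Composing $\SP[t]\to\KU[q]^\wedge_p\to A$ shows that $\xi=\Phi_{p^k}(q)$ is an $\EE_2\otimes\AA_2$-strict unit in $\pi_0A$. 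By \Cref{log}, $\delta(\xi)=0$, i.e.\ $\psi(\xi)=\xi^p$.

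The second step is to compute $\psi(\xi)$ directly and check it is not $\xi^p$. We need to know $\psi$ on $\pi_0A$. Since $q$ comes from the $\EE_\infty$-map $\SP[q]\to\KU[q]$, it is $\EE_\infty$-strict, so $\psi(q)=q^p$. The operation $\psi$ is a ring endomorphism that is the identity on $\Z_p$ (the Adams operation on $\pi_0\KU_p$), so $\psi(\Phi_{p^k}(q))=\Phi_{p^k}(q^p)$. It therefore suffices to show $\Phi_{p^k}(q^p)\neq\Phi_{p^k}(q)^p$ in $\pi_0A$. Because $\Z_p[q]\to\pi_0A$ is injective, it is enough to compare these as polynomials. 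For $k\ge1$ we have $\Phi_{p^k}(q)=\sum_{i<p}q^{ip^{k-1}}$. Then $\Phi_{p^k}(q)^p$ contains cross terms with coefficient $\binom{p}{j}\neq0$ when $p>2$; for instance, the coefficient of $q^{p^{k-1}}$ is $p$, whereas in $\Phi_{p^k}(q^p)$ that coefficient is $0$. For $p=2$ we need $k\ge2$: then $\Phi_{2^k}(q)^2=1+2q^{2^{k-1}}+q^{2^k}$, while $\Phi_{2^k}(q^2)=1+q^{2^k}$, and these differ. When $p^k=2$, the two sides agree mod the middle term only if $q\mapsto$... and in fact $\Phi_2(q)^2=(1+q)^2\ne1+q^2$ as well, so this step must fail somewhere else for $p^k=2$. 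The hypothesis is presumably needed elsewhere, which I flag below.

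The main obstacle is the first step. We must justify that $\pi_0$ of the localized algebra is torsion free. We must also justify that $\psi(q)=q^p$, which requires $q$ to be strict in the $K(1)$-local $\EE_\infty$ sense; this holds because $\SP[q]\to\KU[q]$ is $\EE_\infty$. The worry for $p^k=2$ should be resolved by checking whether \Cref{log} genuinely applies there: the $\EE_2\otimes\AA_2$-strictness of $1+q$ might force something only for $p$ odd or for higher roots. If the computation above in fact gives a contradiction for $p^k=2$ too, then the restriction in the statement is just inherited from the context of \Cref{power-series-algebra}, and the proof is complete.
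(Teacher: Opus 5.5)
Your proof is correct, and it reaches the contradiction by a different, more elementary route than the paper. The paper applies \Cref{log} directly to $\Phi_{p^k}(q)$ in $\KU[q]^\wedge_p$ to get $\EE_\infty$-strictness. It then forms the $\EE_\infty$-quotient $\SP\otimes_{\SP[t]}\KU[q]^\wedge_p$, which has $\zeta_{p^k}$ in $\pi_0$, and invokes \Cref{thm:einfty}.

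You instead first invert $\Phi_{p^k}(q)$. The paper skips this step, but \Cref{log} actually requires it: $\Phi_{p^k}(q)\equiv (q-1)^{(p-1)p^{k-1}}$ mod $p$, so it is not a unit in $\Z_p\langle q\rangle$. You then read off $\delta(\Phi_{p^k}(q))\neq 0$ directly from $\psi(q)=q^p$, so \Cref{thm:einfty} is never needed. After inverting, the paper's quotient would be $0$. To run its argument carefully one must pull $\delta(\xi)=0$ back along $\Z_p\langle q\rangle\hookrightarrow\pi_0A$ and use \Cref{lem:k1localstrict}, and at that point your computation has already finished the job. The injectivity you flag is immediate: both rings are $p$-torsion free and $p$-adically separated, and mod $p$ the map is $\FF_p[q]\to\FF_p[q][(q-1)^{-1}]$.

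The paper's route buys independence from the ambient Frobenius. It rules out any $\EE_2\otimes\AA_2$-strict element of a torsion-free $K(1)$-local $\EE_\infty$-$\KU_p$-algebra whose strict quotient would contain $\zeta_{p^k}$, without computing $\psi$ there; this is the sense of ``nor can any other similar kind of strict quotient.'' Your route buys elementarity and a stronger conclusion.

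Your worry about $p^k=2$ is unfounded: your argument really does prove more. Comparing coefficients of $q^{p^{k-1}}$ works for every $p$ and every $k\geq 1$, and $(q-1)^p\neq q^p-1$ handles $k=0$. The hypothesis $p^k\neq 1,2$ is not inherited from \Cref{power-series-algebra}. It comes from the paper's use of \Cref{thm:einfty}, which says nothing when $p^k\leq 2$ (e.g.\ $\KU_2$ contains $-1$).

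One caution at $p=2$, shared with the paper: the final exponentiation in the proof of \Cref{log} is not valid on $1+2\pi_0A$, since for instance $\log(-1)=0$ in $\Z_2$. So at $p=2$ that proof securely yields only $\ell(\xi)=0$, i.e.\ $\psi(\xi)^2=\xi^4$. Your computation survives this, because the coefficients of $q^{2^{k-1}}$ in $\Phi_{2^k}(q^2)^2$ and $\Phi_{2^k}(q)^4$ are $0$ and $4$. The paper's route, by contrast, needs the full conclusion $\delta(\xi)=0$ to form the $\EE_\infty$-quotient.
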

\begin{proof}
	If $\Phi_{p^k}(q)$ were $\EE_2\otimes \AA_2$-strict, it would be $\EE_{\infty}$-strict by \Cref{log}. But then the tensor product $\SP\otimes_{\SP[t]}\KU[q]^\wedge_p$ would be a $K(1)$-local $\EE_{\infty}$-ring having a primitive $p^k$th root of unity in $\pi_0$, contradicting \Cref{thm:einfty}.
\end{proof}

\section{$K(1)$-local $\E{4}$-$\KU$-algebras}

The goal of this section is to prove \Cref{thm:optimalroots}. The key point 
is \Cref{prop:enfroblift} below, that $K(1)$-local $\EE_{2n+2}$-algebras have 
a lift of Frobenius mod $p^n$. This makes the obstruction to roots of unity 
from \Cref{thm:einfty} still hold, and it moreover generalizes to more general ramified extensions. We then use an ultraproduct argument to 
show that $r_{p,h,n}$ agrees with the number for $\EE_{\infty}$ for $n$ sufficiently large.

\begin{prop}\label{prop:confn}
	There is a square as below, where the vertical maps are isomorphisms, and the lower horizontal map is the $p^n$-Bockstein:	
	\begin{center}
		\begin{tikzcd}
			\mathrm{Conf}_p(\RR^{2n+1})\otimes \KU\ar[r]\ar[d] &B\Sigma_{p}\otimes \KU\ \ar[d,"\mathrm{tr}"]\\
			\Sigma^{-1}\KU/p^n\ar[r,"\beta_{p^n}"] & \KU.
		\end{tikzcd}
	\end{center}
\end{prop}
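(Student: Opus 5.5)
We compute both columns through their $\KU$-homotopy groups, and then use that $\KU$-modules with cyclic homotopy concentrated in a single parity are determined by that homotopy. Throughout, $\mathrm{Conf}_p(\RR^{2n+1})$ and $B\Sigma_p$ denote the unordered configuration space and the classifying space, with the basepoint summand split off (reduced suspension spectra). Everything is $p$-complete, and $B\Sigma_p\otimes\KU$ is read $K(1)$-locally. This reduced and $K(1)$-local reading is my interpretation of the notation: unreduced, the right vertical map could not be an isomorphism.

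\textbf{Step 1: the right column.} First compute $\KU_*(B\Sigma_p)$. Since $\Sigma_p$ has a Sylow subgroup $C_p$ with normalizer $C_p\rtimes(\ZZ/p)^\times$, the $p$-local reduced homology of $B\Sigma_p$ is the $(\ZZ/p)^\times$-invariant summand of that of $BC_p$. The reduced $\KU$-homology of $BC_p$ is $(\QQ_p/\ZZ_p)^{p-1}$ in odd degrees. Taking invariants gives $\widetilde{\KU}_{\mathrm{odd}}(B\Sigma_p)\cong \QQ_p/\ZZ_p$ and $\widetilde{\KU}_{\mathrm{even}}(B\Sigma_p)=0$. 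Hence, before localizing, $B\Sigma_p\otimes\KU\simeq \Sigma^{-1}\KU\otimes M(\QQ_p/\ZZ_p)$. Its $K(1)$-localization (equivalently, its $p$-completion) is $\KU_p$, via the cofiber sequence $\KU\to\KU[1/p]\to\KU\otimes M(\QQ_p/\ZZ_p)$. Next I would check that the transfer $B\Sigma_p\otimes\KU\to\KU$ realizes this identification. Restricted to $BC_p$, the transfer is the augmentation-ideal/induced-character map, which sends the bottom generators of $\QQ_p/\ZZ_p$ compatibly. So on homotopy it is precisely the connecting map of $\ZZ_p\to\QQ_p\to\QQ_p/\ZZ_p$, up to a $p$-adic unit. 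This is essentially the $K(1)$-local Kahn--Priddy theorem.

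\textbf{Step 2: the left column.} Cohen's computation gives the mod $p$ homology of $\mathrm{Conf}_p(\RR^{2n+1})_{\Sigma_p}$: it is the truncation of $\widetilde H_*(B\Sigma_p;\FF_p)$ to degrees $\le 2n(p-1)$. This consists of classes in degrees $2i(p-1)-1$ and $2i(p-1)$ for $1\le i\le n$, and the inclusion into $B\Sigma_p$ is injective on mod $p$ homology. The $\KU$-Atiyah--Hirzebruch spectral sequence therefore has $E^2$-page $n$ copies of $\ZZ/p$, all in odd total degree. The odd-degree $\ZZ/p$'s are killed by Bocksteins into the even-degree ones, and this happens compatibly with $B\Sigma_p$. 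The spectral sequence collapses there for degree reasons. Comparing filtrations with $B\Sigma_p$, whose filtration quotients of $\QQ_p/\ZZ_p$ are exactly these $\ZZ/p$'s, shows that $\widetilde{\KU}_*(\mathrm{Conf}_p(\RR^{2n+1})_{\Sigma_p})\to\widetilde{\KU}_*(B\Sigma_p)$ is injective. Its image is the $n$th filtration piece, namely $p^{-n}\ZZ_p/\ZZ_p\cong\ZZ/p^n$, in odd degree. A $\KU$-module whose homotopy is a cyclic $\ZZ/p^n$ in odd degrees and zero in even degrees is equivalent to $\Sigma^{-1}\KU/p^n$: lift a generator to a map $\Sigma^{-1}\KU\to M$, observe that it factors through $\Sigma^{-1}\KU/p^n$, and check that this is an isomorphism on $\pi_*$. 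Note that this module is already $K(1)$-local, so no localization is needed here.

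\textbf{Step 3: commutativity.} The top horizontal map is identified on $\pi_{-1}$ with the inclusion $\ZZ/p^n\hookrightarrow\QQ_p/\ZZ_p$. Composing with the identification $L_{K(1)}(\Sigma^{-1}\KU\otimes M(\QQ_p/\ZZ_p))\simeq\KU_p$ from Step 1 gives the composite $\Sigma^{-1}\KU/p^n\to\Sigma^{-1}\KU\otimes M(\QQ_p/\ZZ_p)\to\KU$. This is the connecting map of $\ZZ_p\xrightarrow{p^n}\ZZ_p\to\ZZ/p^n$, i.e.\ $\beta_{p^n}$. Since maps $\Sigma^{-1}\KU/p^n\to\KU$ of $\KU$-modules are detected on $\pi_{-1}$, the square commutes after rescaling the left vertical isomorphism by a unit.

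The main obstacle is Step 2: the injectivity and the nontrivial extensions assembling $\ZZ/p^n$, rather than $(\ZZ/p)^n$, in the truncated spectral sequence. I would handle it as above, by comparison with $B\Sigma_p$, where the answer $\QQ_p/\ZZ_p$ is known. If the collapse argument is delicate, a fallback is the cofiber sequence relating $\mathrm{Conf}_p(\RR^{2n+1})_{\Sigma_p}\to B\Sigma_p$ to the Thom spectrum $(B\Sigma_p)^{(2n+1)\bar\rho}$, whose $\KU$-homology is computed by the Thom isomorphism together with multiplication by the Euler class, of $p$-adic valuation $n$.
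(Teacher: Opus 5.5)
Your argument is correct in substance but follows a different route from the paper. For odd $p$ the paper does no computation: it quotes the commutative diagram of Brantner--Hahn--Knudsen. For $p=2$ it uses $\mathrm{Conf}_2(\RR^{2n+1})\simeq\RP^{2n}\subset\RP^\infty$. It observes that after $2$-completion both $\KU$-homologies are perfect $\KU_2$-modules, and passes to $\KU_2$-linear duals. There the Bockstein becomes the reduction $\KU_2\to\KU/2^n$, so the claim reduces to the classical fact that restriction $\widetilde{\KU}^0(\RP^\infty)\cong\ZZ_2\to\widetilde{\KU}^0(\RP^{2n})\cong\ZZ/2^n$ is surjective.

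You instead work uniformly in $p$ on the homology side:
\begin{itemize}
\item Cohen's computation, with injectivity into $H_*(B\Sigma_p;\FF_p)$, gives a collapsing Atiyah--Hirzebruch spectral sequence.
\item Comparison with $\widetilde{\KU}_{\mathrm{odd}}(B\Sigma_p)\cong\QQ_p/\ZZ_p$ forces the answer to be cyclic of order $p^n$ rather than $(\ZZ/p)^n$.
\item Classification of $\KU$-modules by homotopy, plus the map of cofiber sequences from $\ZZ\xrightarrow{p^n}\ZZ\to\ZZ/p^n$ to $\ZZ\to\ZZ[1/p]\to\QQ_p/\ZZ_p$, identifies the composite with $\beta_{p^n}$.
\end{itemize}
Your version is self-contained, treats all primes at once, and shows where the cyclicity comes from. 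The paper's version is much shorter but rests entirely on the cited computations. For odd $p$ this includes the fact that $\mathrm{tr}$ is an isomorphism.

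Two local points need fixing.

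First, a $\KU$-module map $\Sigma^{-1}\KU/p^n\to\KU$ goes from odd to even homotopy. So it is zero on all homotopy groups and is \emph{not} detected on $\pi_{-1}$. Such maps instead form $\mathrm{Ext}^1(\ZZ/p^n,\pi_0\KU)\cong\ZZ/p^n$, generated by $\beta_{p^n}$. What is detected on $\pi_{-1}$ are maps between $\KU$-modules whose homotopy is concentrated in odd degrees; that is what pins down your top horizontal map. You do not need the false claim anyway. After transporting along your chosen left vertical isomorphism, you have already shown the top-right composite is $u\cdot\beta_{p^n}$ for a $p$-adic unit $u$. It suffices to multiply the left vertical map by $u$.

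Second, the assertion that $\mathrm{tr}$ is a $K(1)$-local isomorphism needs an actual argument; the cleanest one is in $\KU$-cohomology.
\begin{itemize}
\item The Kahn--Priddy transfer pulls $1$ back to the permutation representation $\rho$.
\item On $BC_p$, the reduced part $\rho-p$ restricts to $\sum_{i=1}^{p-1}(\chi^i-1)$.
\item This element generates the $(\ZZ/p)^\times$-invariants of $\widetilde{\KU}^0(BC_p)\cong\ZZ_p^{p-1}$, and these invariants are $\widetilde{\KU}^0(B\Sigma_p)_p\cong\ZZ_p$.
\end{itemize}

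Also, the homology Bockstein runs from degree $2i(p-1)$ to degree $2i(p-1)-1$. Your sentence about which classes are killed has this direction reversed, although the resulting $E^2$-page is right.
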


\begin{proof}
    For $p>2$, this follows from the commutative diagram on \cite[Section 6.2, Page 44]{brantner-hahn-knudsen}. For $p=2$, note that $\mathrm{Conf}_2(\RR^{2n+1}) \cong \RP^{2n}$, and the map to $B\Sigma_2 = \RP^\infty$ is the standard inclusion. Upon $2$-completion, both $\KU \otimes \RP^{2n}$ and $\KU \otimes \RP^\infty$ are perfect $\KU_2$-modules, so it suffices to identify the $\KU_2$-linear dual of the map $\KU_2 \otimes \RP^{2n} \to (\KU \otimes \RP^\infty)_2$ with the map $\KU_2 \to \KU/2^n$. This follows from \cite[Corollary 4.17]{powell-notes}.
\end{proof}

Below, we use the map $p:X/p^n \to X/p^{n+1}$ for a spectrum $X$.
\begin{prop}\label{prop:enfroblift}
	Suppose that $R$ is a $K(1)$-local $\EE_{2n+2}$-$\KU$-algebra. Then there is a natural map
	$$\delta:\pi_0R \to \pi_0(R/p^{n})$$
	such that
	\begin{align*}
	   \delta(x+y) & = \delta(x)+\delta(y) - \sum_{i=1}^{p-1}\frac{\binom{p}{i}}{p}x^iy^{p-i}, \\
	   \delta(xy) & = \delta(x)y^p+\delta(y)x^p+p\delta(x)\delta(y).
	\end{align*}
	In particular, the map $\phi:\pi_0R \to \pi_0(R/p^{n+1})$ defined by $\phi(x) = x^p+p\delta(x)$, is a ring homomorphism lifting the Frobenius map.
\end{prop}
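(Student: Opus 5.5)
We construct $\delta$ by imitating the $\EE_\infty$ construction of the $\theta$-operation on $K(1)$-local rings, using only the part of the $p$-th power operation that an $\EE_{2n+2}$-structure provides.

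\textbf{The truncated power operation.} An $\EE_{2n+2}$-structure on $R$ gives, for each $x\in\pi_0R$ represented by $x:\SP\to R$, an extended power map
$$\Sigma^\infty_+\Conf_p(\RR^{2n+2})\otimes_{\Sigma_p}\SP \longrightarrow R.$$
Since $R$ is a $\KU$-algebra, this can be made $\KU$-linear as $\KU\otimes \Conf_p(\RR^{2n+2})_{h\Sigma_p}\to R$. We restrict this to the subspace $\Conf_p(\RR^{2n+1})\subset\Conf_p(\RR^{2n+2})_{h\Sigma_p}$. Rather than a literal inclusion, we use the cofiber sequence relating $\Conf_p(\RR^{2n+1})$, $\Conf_p(\RR^{2n+2})$ and their orbits; equivalently, the $\EE_{2n+2}$-structure restricts over the $(2n+1)$-skeleton of $B\Sigma_p$ in the Fox--Neuwirth filtration.

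\Cref{prop:confn} identifies $\KU\otimes\Conf_p(\RR^{2n+1})$ with $\Sigma^{-1}\KU/p^n$, compatibly with the transfer to $\KU$. Comparing with the $\EE_\infty$ case, where $B\Sigma_p\otimes\KU\simeq \KU\oplus(\text{the transfer summand})$, we get the following picture. The $\EE_{2n+2}$-power operation produces a class $P(x)\in\pi_0(R\otimes_{\KU}(\KU\otimes \Conf_p(\RR^{2n+2})_{h\Sigma_p})^\vee)$. After $K(1)$-localization, this splits into the component at $1$, which is $x^p$, and a component landing in the dual of $\Sigma^{-1}\KU/p^n$. That dual is $\KU/p^n$ up to shift, so the second component is a class in $\pi_0(R/p^n)$.

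\textbf{Definition of $\delta$.} Define $\psi(x)$ to be the resulting ``total'' operation, with values in $\pi_0(R/p^{n+1})$, and set $\delta(x)$ so that $\psi(x)=x^p+p\delta(x)$. Concretely, $\delta(x)$ is the transfer component, exactly as in the McClure/Hopkins description of $\theta$, with the $p^n$-Bockstein of \Cref{prop:confn} accounting for the loss of precision.

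\textbf{The identities.} We obtain the formulas for $\delta(x+y)$ and $\delta(xy)$ from the standard Cartan and additivity formulas for the total power operation. The additivity formula is
$$P(x+y)=P(x)+P(y)+\mathrm{tr}\Big(\textstyle\sum_{i=1}^{p-1} x^iy^{p-i}/\text{orbits}\Big),$$
and the transfer of the mixed terms contributes $\frac{1}{p}\binom{p}{i}x^iy^{p-i}$ after dividing by $p$. These formulas hold already on $\Conf_p(\RR^{m})$ for the relevant $m$: the diagonal maps $\Conf_p(\RR^{m})\to\Conf_p(\RR^m)\times\Conf_p(\RR^m)$ and the decomposition of $\Conf_p(\RR^m\sqcup\RR^m)$ into summands indexed by $i$ are compatible with the inclusions into $B\Sigma_p$. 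We then check that $x\mapsto x^p+p\delta(x)$ is a ring map modulo $p^{n+1}$ by expanding, exactly as for $\delta$-rings. Naturality in $R$ is automatic from the construction.

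\textbf{Main obstacle.} The hard step is the first: extracting a well-defined class in $\pi_0(R/p^n)$ with the right normalization. One has to check that the splitting off of $x^p$ and the division by $p$ are legitimate when only the $\Conf_p(\RR^{2n+1})$ piece is available; this is where the index shift between $2n+1$ and $2n+2$ and the precise form of the Bockstein in \Cref{prop:confn} enter. The approach would be to dualize the square of \Cref{prop:confn}, so that the transfer $\KU\to\KU\otimes B\Sigma_p$ becomes the reduction $\KU\to\KU/p^n$, and then read $\delta$ off as in the $\EE_\infty$ case.
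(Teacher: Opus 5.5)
\textbf{The gap is in the identities step.} The additivity and Cartan formulas for the total power operation do not ``hold already on $\Conf_p(\RR^m)$'' for finite $m$.

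In $P(x+y)$, the mixed terms are transfers of classes in $R^0\bigl(\Conf_p(\RR^m)/(\Sigma_i\times\Sigma_{p-i})\bigr)$; $p$-locally these spaces are retracts of ordered configuration spaces. These classes are not the constants $x^iy^{p-i}$. They carry further components, given by iterated Browder brackets and indexed by the positive-dimensional cohomology of $\Conf^{\mathrm{ord}}_p(\RR^m)$.

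Likewise, the Cartan formula requires deforming the interleaved configuration $\{c_j\pm\epsilon v\}$ into two separated clusters. This cannot in general be done $\Sigma_p$-equivariantly over all of $\Conf_p(\RR^m)$.

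These corrections really occur for $\EE_{2n+1}$-algebras. In \Cref{rmk:browderbracket}, mod $\beta$ the operation becomes $Q_2$ on $\EE_3$-$\FF_2$-algebras, whose Cartan formula contains the term $x[x,y]y$, and whose additivity formula likewise picks up $[x,y]$. Appealing to compatibility with the inclusions into $B\Sigma_p$ does not help by itself, because $R$ has no $\EE_\infty$-structure to compare with. Your argument also never uses $\EE_{2n+2}$ rather than $\EE_{2n+1}$. You place the index shift in the normalization step, where it plays no role: the definition of $\delta$ needs only $\EE_{2n+1}$.

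\textbf{The missing idea} is that passing from $\RR^{2n+1}$ to $\RR^{2n+2}$ kills exactly these error terms, because the inclusion $\Conf^{\mathrm{ord}}_p(\RR^{2n+1})\to\Conf^{\mathrm{ord}}_p(\RR^{2n+2})$ is null. The paper uses this through the universal example. It defines $\delta$ for $\EE_{2n+1}$-algebras. In the free $\EE_{2n+2}$-$\KU$-algebra on $x,y$, the arity-$p$ part splits into two pure summands and several mixed summands. On $\pi_0(-/p^n)$, the map to the free $\EE_\infty$-algebra is injective on the image of the free $\EE_{2n+1}$-algebra: by \Cref{prop:confn} on the pure summands, and by the nullity on the mixed ones. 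The relation element is defined at the $\EE_{2n+1}$ level and vanishes in the $\EE_\infty$ case, so it vanishes in the $\EE_{2n+2}$ case. The product formula is handled the same way in arity $2p$.

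Your cohomological framework could be repaired along the same lines. Restrict the $\EE_{2n+2}$ total operation to configurations lying in $\RR^{2n+1}$. Then separate the $x$'s from the $y$'s along $e_{2n+2}$, where no collisions can occur, and observe that the mixed classes become constant. That is precisely the argument your proposal lacks.

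Apart from this, your definition of $\delta$ is the paper's definition, dualized. Take $\delta$ to be the reduced (transfer) component directly and define $\phi$ from it. Writing ``set $\delta$ so that $\psi(x)=x^p+p\delta(x)$'' is not well defined, since $p\colon\pi_0(R/p^n)\to\pi_0(R/p^{n+1})$ need not be injective.
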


\begin{proof}
	We will define the map $\delta$ for all $K(1)$-local $\EE_{2n+1}$-$\KU$-algebras, and 
	show that it satisfies the desired relations for all $\EE_{2n+2}$-algebras.
	
	To define the map $\delta$, it suffices to do this in the universal example of the 
	free $\EE_{2n+1}$-$\KU$-algebra on a class $x$ in degree $0$. 
	From \Cref{prop:confn}, we learn that in arity $p$, the map from the free 
	$\EE_{2n+1}$-algebra to the free $\EE_{\infty}$-algebra is an isomorphism 
	on $\pi_0/p^n$, allowing us to define $\delta$ as the unique lift of the 
	associated $\EE_{\infty}$-operation.
	
	For the relations, it suffices to prove them in the universal 
	$\EE_{2n+2}$-algebra on two classes in degree $0$. The relation for $\delta(x+y)$ is witnessed in the arity $p$ part of this free algebra, which is $((\KU\oplus \KU)^{\otimes p}\otimes (\Conf_p(\RR^{2n+2}))_+))_{h\Sigma_p}$. We claim that the map
	$$((\KU\oplus \KU)^{\otimes p}\otimes (\Conf_p(\RR^{2n+2}))_+))_{h\Sigma_p}/p^n \to ((\KU\oplus \KU)^{\otimes p}_{h\Sigma_p}/p^n$$
	is injective on the image of $\pi_0((\KU\oplus \KU)^{\otimes p}\otimes (\Conf_p(\RR^{2n+1}))_+))_{h\Sigma_p}/p^n$ in $\pi_0$. The result then follows, since the target is the arity $p$ part of the universal $\EE_{\infty}$-algebra on two generators, where the desired relation holds.
	
	To see the claim, we note that ($p$-adically), $((\KU\oplus \KU)^{\otimes p}\otimes (\Conf_p(\RR^{i}))_+))_{h\Sigma_p}$ naturally is a sum of two copies of $\KU^{\otimes p}\otimes \Conf_p(\RR^i)_{h\Sigma_p}$, and a sum of copies of $\Conf_p^{\mathrm{ord}}(\RR^i)_+$ indexed by the nontrivial $\Sigma_p$ orbits of $2^p$. The claim then follows from the fact that the inclusion $\Conf_p^{\mathrm{ord}}(\RR^i) \to \Conf_p^{\mathrm{ord}}(\RR^{i+1})$ is null, so the image factors through a section from $\Conf_p^{\mathrm{ord}}(\RR^\infty)$.
	
	The proof of the relation for $\delta(xy)$ is nearly identical, except the relation takes place in arity $2p$ instead of $p$.
	
	Once these relations are proven, it follows that $\varphi$ is a ring map and 
	a lift of Frobenius, for the same reason that a delta structure gives a lift of Frobenius.
\end{proof}

\begin{remark}\label{rmk:browderbracket}
	The relations satisfied by $\delta$ are different if $R$ is only an $\EE_{2n+1}$-$\KU$-algebra, and can involve Browder brackets.

	We sketch how this works for the relation $\delta(xy) = \delta(x)y^2+\delta(y)x^2+2\delta(x)\delta(y)$ for $p=2$ and $n=1$. In this case, $\beta\delta$ lifts to a natural map $\pi_0(R) \to \pi_{2}(R/2)$ for an $\EE_3$-$\ku$-algebra.
	Moreover, mod $\beta$, it is easy to see that this operation is the Dyer--Lashof operation $Q_2$ on $\EE_3$-$\FF_2$-algebras. The nontriviality of the Browder bracket term then 
	follows from the fact that the Cartan formula for $Q_2$ of an $\EE_3$-algebra has a 
	Browder bracket term: it is given by $Q_2(xy) = Q_2(x)y^2+x^2Q_2(y)+x[x,y]y$ (see 
	\cite[Theorem 5.2]{lawson-dl}), using that $Q_1$ vanishes since the classes $x,y$ are 
	integral.

	It is possible that arguments similar to ours are able to prove the analog of \Cref{thm:ramificationobstruction} for $K(1)$-local $\EE_3$-$\KU$-algebras, 
	and we believe it would be interesting to work this out.
\end{remark}

Using \Cref{prop:enfroblift}, we can prove \Cref{thm:ramificationobstruction}, which is the key input to \Cref{thm:optimalroots}.

\begin{customthm}{D}\label{thm:ramificationobstruction}
	Let $A$ be a $K(1)$-local $\EE_4$-$\KU$-algebra. If there is a map from $\cO_K \to \pi_0A$ where $\cO_K$ is the ring of integers of a ramified extension $K$ of $\QQ_p$, then $A = 0$.
\end{customthm}

\begin{proof}
	Let $\pi$ be a uniformizer of $\cO_K$, with valuation $\frac 1 e$ where $e>1$ by assumption. 
	Let $a$ be an integer minimally chosen so that $\frac {ap} e\geq 1$. Note that $a<e$. Then $\varphi(\pi^a) \equiv \pi^{ap} \equiv 0 \pmod {p}$.
	On the other hand, $\varphi(\pi^e)|\varphi(p)=p$. It follows that $p\varphi(\pi^{e-a})|\varphi(\pi^e)|p$. But $\varphi(\pi^{e-a})$ is nilpotent in $\pi_0(A/p^2)$, so we learn that $p=0$ in $\pi_0(A/p^2)$, and hence also in the subring $\pi_0(A)/p^2$. 
	This means that $p=p^2x$ for some $x \in \pi_0(A)$. Applying $\delta$, we obtain that 
	$\delta(p) = \delta(p^2x)$ in $\pi_0(A/p)$. However $\delta(p^2x)$ is divisible by 
	$p$ and $\delta(p)$ is a unit, so we learn that $A/p=0$, and hence $A=0$.
\end{proof}

\begin{proof}[Proof of \Cref{thm:optimalroots}]
	The fact that $r_{p,1,4} = 0$ for $p>2$ and $r_{2,1,4} = 1$ follows from \Cref{thm:ramificationobstruction} since $\Z_p[\zeta_p]$ and $\Z_2[\zeta_4]$ come from ramified extensions of $\QQ_p$ and $\QQ_2$ respectively.

	We now show that if we fix $p$ and $h$, then for $n\gg 0$, the number $r_{p,h,n}$ is $0$ if $p>2$ and is $1$ if 
	$p=2$. If this were false, then there woud be a sequence $R_n$ of nonzero $T(h)$-local 
	$\EE_n$-algebras containing a $p^{\text{th}}$ (resp. $4^{\text{th}}$ for $p=2$) root of 
	unity. The ultraproduct of these would then be a nonzero $T(h)$-local 
	$\EE_{\infty}$-algebra with such a root of unity, contradicting \Cref{thm:einfty}.
\end{proof}

\bibliographystyle{alpha}
\bibliography{main}
\end{document}